\newtheorem{theorem}{Theorem}[section]
\newtheorem{lemma}[theorem]{Lemma}
\newtheorem{corollary}[theorem]{Corollary}
\newtheorem{proposition}[theorem]{Proposition}
\theoremstyle{definition}
\newtheorem{example}[theorem]{Example}
\theoremstyle{remark}
\newtheorem{remark}[theorem]{Remark}
\numberwithin{equation}{section}
\newcommand{\calM}{\mathcal{M}}
\newcommand{\calO}{\mathcal{O}}
\newcommand{\calP}{\mathcal{P}}
\newcommand{\calQ}{\mathcal{Q}}
\newcommand{\calS}{\mathcal{S}}
\newcommand{\sfS}{\mathsf{S}}
\newcommand{\sfP}{\mathsf{P}}
\newcommand{\sfD}{\mathsf{D}}
\newcommand{\frakr}{\mathfrak{r}}
\newcommand{\frakl}{\mathfrak{l}}
\newcommand{\frakS}{\mathfrak{S}}
\newcommand{\bbC}{\mathbb{C}}
\newcommand{\bbP}{\mathbb{P}}
\newcommand{\bbQ}{\mathbb{Q}}
\newcommand{\bbR}{\mathbb{R}}
\newcommand{\bbZ}{\mathbb{Z}}
\newcommand{\bfv}{\mathbf{v}}
\newcommand{\la}{\langle}
\newcommand{\ra}{\rangle}
\newcommand{\SL}{\textrm{SL}}
\newcommand{\Pic}{\text{Pic}}
\newcommand{\adj}{\text{adj}}
\newcommand{\End}{\text{End}}
\newcommand{\rank}{\text{rank}}
\newcommand{\da}{\dasharrow}
\newcommand{\Kum}{\textrm{Kum}}
\newcommand{\Jac}{\textrm{Jac}}
\newcommand{\SU}{\textrm{SU}}
\newcommand{\ns}{\textrm{ns}}
\newcommand{\pr}{\textrm{pr}}
\newcommand{\Bis}{\textrm{Bis}}
\newcommand{\beq}{\begin{equation}}
\newcommand{\eeq}{\end{equation}}
\begin{document}
\title{Corrado Segre and nodal cubic threefolds}
\author{Igor Dolgachev}
\begin{abstract} We discuss the work of Corrado Segre on nodal cubic hypersurfaces with emphasis on the cases of 6-nodal and 10-nodal cubics. In particular we discuss the Fano surface of lines and conic bundle structures on such threefolds.  We review some of the modern research in algebraic geometry related to Segre's work.
\end{abstract}

\address{Department of Mathematics, University of Michigan, 525 E. University Av., Ann Arbor, Mi, 49109, USA}
\email{idolga@umich.edu}

\maketitle
 
\section{Introduction}
The following is a  detailed exposition of my talk devoted to two memoirs of Corrado Segre on irreducible cubic hypersurfaces in $\bbP^4$ with $d$ ordinary double points (nodes) \cite{Segre1}, \cite{Segre2}. We will show in this article how Segre's work is related to the current research in algebraic geometry.
  
Let $X$ be an irreducible  cubic hypersurface in $\bbP^4$ with a node $q$. Choose a hyperplane $H$ that intersects $X$ transversally along a nonsingular cubic surface $F$ and consider the projective coordinates in $\bbP^4$ such that $H = V(t_0)$ and  $q$ is equal to the point $[1,0,0,0,0]$. Then the equation of $X$ can be written in the form
\begin{equation}\label{cubic1}
X:t_0a_2(t_1,t_2,t_3,t_4)+a_3(t_1,t_2,t_3,t_4) = 0,
\end{equation}
where $a_2$ and $a_3$ are homogeneous forms of degrees 2 and 3, respectively, such that $Q = V(a_2)$ is a nonsingular quadric surface in $H$ and $F = V(a_3)$.  The equations 
$a_2 = a_3= 0$ define a curve  of degree 6 in the 
hyperplane  $H$. Following \cite{Fink1} we call it the \emph{associated curve} of $X$ with respect to $q$, it will be denoted by $C(X,q)$.  The curve $C(X,q)$ is 
a curve of bidegree $(3,3)$ lying on $Q$. We will assume that it is reduced. Let $X'$ be the proper transform of $X$ under the blow-up of $\bbP^4$ at the point $q$. Then the projection map $\pr_q:X'\to \bbP^3$ defines an isomorphism between $X'$ and the  blow-up of $\bbP^3$ with center at $C(X,q)$. 
The inverse rational map 
$$\alpha:\bbP^3\dasharrow \bbP^4$$
is given by the linear system of cubic containing $C(X,q)$. The 
latter is spanned by the cubic $F = V(a_3)$ and any cubic of the form $V(a_2l)$, where $l$ is a linear form in $t_1,\ldots,t_4$. It follows that the rational map $\alpha$ is given by the formula
$(t_1,\ldots,t_4) \mapsto (a_3,t_1a_2,t_2a_2,t_3a_2,t_4a_2)$, and hence $Q$ 
is contracted to the point $q = [1,0,0,0,0]$. Also, it is clear that any singular point $q'\ne q$ of $X$ is projected to a singular point of $C(X,q)$. Since we assume that all singular points are ordinary double points, their images are ordinary double points of $C(X,q)$.

The arithmetical genus of a curve of bidegree $(3,3)$ on $Q$ is  equal to 4. If the curve has more than four double points, it must be reducible. A simple analysis 
shows that the largest possible number $k$ of double points of $C(X,q)$ is equal to $9$. Moreover, $k = 9$ happens if and only if $C(X,q)$ is the union of 6 lines on $Q$. This gives the following.

\begin{proposition} The number $d = k+1$ of ordinary double points of an irreducible cubic hypersurface in $\bbP^4$ is less than or equal to 10.
\end{proposition}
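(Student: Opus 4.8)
The plan is to prove the equality $d = k+1$ by setting up an exact bijection between the $d-1$ nodes of $X$ other than $q$ and the $k$ double points of $C(X,q)$, and then to feed in the bound $k\le 9$ already recorded above. The one node not accounted for by $C(X,q)$ is the centre $q$ of the projection itself, which explains the ``$+1$''.

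The tool is the isomorphism between $X'$ and the blow-up of $\bbP^3$ along $C(X,q)$ established above, where $X'$ is the proper transform of $X$ in the blow-up of $\bbP^4$ at $q$. On one side, the contraction $X'\to X$ is an isomorphism away from $q$ and resolves the node $q$, whose exceptional locus is the smooth quadric $Q$; hence $X'$ is smooth along this locus and its singular points correspond, with the same analytic type, exactly to the $d-1$ nodes of $X$ different from $q$. On the other side, since $\bbP^3$ is smooth the blow-up of $\bbP^3$ along $C(X,q)$ is smooth over the smooth locus of $C(X,q)$ and singular only over $\Sing(C(X,q))$. The heart of the matter is the local computation at a node of $C(X,q)$: taking the two transverse branches as the coordinate axes in the plane $z=0$, so that the ideal of the curve is $(z,xy)$, the blow-up in the relevant chart is the hypersurface $xy=vz$, a single ordinary double point. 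Thus each node of $C(X,q)$ contributes exactly one node to $X'$, and these account for all of them.

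The main obstacle is to guarantee that this correspondence is genuinely a bijection, i.e.\ that $C(X,q)$ has no singularities other than ordinary nodes and that distinct nodes of $X$ do not collapse to a single point of $C(X,q)$. The first point follows from the hypothesis that $X$ has only ordinary double points, transported through the isomorphism: a worse singularity of $C(X,q)$ would force $X'$, and hence $X$, to acquire a singularity that is not an ordinary double point. The delicate case for the second point is that of two nodes $q',q''$ of $X$ collinear with $q$, which would map to the same point of $C(X,q)$; here the line $\overline{qq'q''}$ meets $X$ with total multiplicity at least $6>3$, so it lies on $X$, and since lines through $q$ lying on $X$ correspond precisely to the points of $C(X,q)=V(a_2,a_3)$, one checks that its image point would then be a non-nodal singularity of $C(X,q)$, again contradicting the hypothesis. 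With the bijection in place we obtain $k=d-1$, and since, by the analysis above, a reduced curve of bidegree $(3,3)$ on $Q$ has arithmetic genus $4$ and therefore at most $9$ double points, we conclude $d=k+1\le 10$.
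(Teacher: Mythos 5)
Your argument is correct and follows essentially the same route as the paper: project from one node, identify the remaining nodes of $X$ with the double points of the associated bidegree $(3,3)$ curve $C(X,q)$ on the smooth quadric $Q$, and bound the latter by $9$ via the arithmetic genus. Your local analysis of the blow-up (the model $xy=vz$ over a node of $C(X,q)$, and the exclusion of collinear nodes and of worse curve singularities) carefully justifies the node-for-node correspondence that the paper asserts in one line; note only that the final step ``arithmetic genus $4$, therefore at most $9$ double points'' still tacitly uses the bound $\delta\le p_a+(\text{number of components})-1\le 4+6-1=9$, i.e.\ the same ``simple analysis'' the paper invokes without detail.
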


It follows from the proof of the previous proposition that the curve $C(X,q)$ is reducible if $d > 5$. The number of its irreducible components is equal to the fourth Betti number $b_4(X)$.\footnote{This fact was essentially known to G. Fano, see \cite{Fano1}.}
 The number $b_4(X)-1$  is called  the \emph{defect} $\textrm{def}(X)$ of $X$. The maximal number of 
linearly independent homology classes of exceptional curves in any small resolution of $X$ is equal to $d-\textrm{def}(X)$ (see \cite{FW}). Note that a small resolution of $X$ may not be a projective variety. The number of projective small resolutions of a nodal cubic threefold $X$ can be found in \cite{FW}.

In this article we will restrict ourselves with two  most interesting, in my view, cases when $d = 10$ or $d = 6$. We will start with the case $d = 10$, 

I am grateful to the referee and Alessandro Verra for many useful remarks that allowed me to improve the exposition of this paper. 
 
\section{Segre 10-nodal cubic primer}\label{S:2}
By a projective transformation, we can fix  the equation $a_2 = 0$ of the quadric $Q$ given in \eqref{cubic1}. The curve $C(X,q)$ has 9 singular points, this forces it to be the union of 9 lines on $Q$ as in the following picture:

\xy (-60,10)*{};(-30,-30)*{};
(0,0)*{};(30,0)*{}**\dir{-};
(0,-10)*{};(30,-10)*{}**\dir{-};
(0,-20)*{};(30,-20)*{}**\dir{-};
(5,5)*{};(5,-25)*{}**\dir{-};
(15,5)*{};(15,-25)*{}**\dir{-};
(25,5)*{};(25,-25)*{}**\dir{-};
(-10,-10)*{C(X,q):};
\endxy

Also, by an automorphism of the quadric, we can fix the curve $C(X,q)$. Two cubic forms defining  $F= 0$ and $F' = 0$ that  cut out $C(X,q)$ in $Q$, differ by $a_2l$, where $l$ is a linear form in $t_1,\ldots,t_4$. Applying the transformation 
$t_0\mapsto t_0+l$, we fix the equation $a_3 = 0$ of the cubic. This shows that 

\begin{proposition}
Two 10-nodal cubic hypersurfaces in $\bbP^4$ are projectively isomorphic. 
\end{proposition}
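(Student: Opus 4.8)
The plan is to put the equation \eqref{cubic1} of an arbitrary 10-nodal cubic into a rigid normal form by exhausting, one layer at a time, the three sources of freedom in that equation: the quadratic form $a_2$, the associated curve $C(X,q)$, and the cubic form $a_3$. Fix a node $q$ of $X$ and a transversal hyperplane $H$ as in the Introduction. Since $X$ has $d=10$ nodes, the curve $C(X,q)$ carries nine ordinary double points, so by the analysis recalled above it is the union of six lines on $Q$, three from each ruling.

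First I would fix the quadric. Any two smooth quadric surfaces in $\bbP^3$ are projectively equivalent, so after a linear change of $t_1,\ldots,t_4$ --- one that extends to $\bbP^4$ fixing $q$ --- I may take $a_2$ to be a fixed standard form, identifying $Q$ with $\bbP^1\times\bbP^1$. Next I would fix $C(X,q)$: reducedness forces the three points cut out in each ruling factor $\bbP^1$ to be distinct, and since $\PGL_2$ is sharply $3$-transitive, the subgroup $\PGL_2\times\PGL_2\subset\Aut(Q)$ moves each triple to $\{0,1,\infty\}$, pinning $C(X,q)$ to a single configuration. These automorphisms are induced by linear maps of $H$ preserving $Q$, hence lift to $\bbP^4$ fixing $q$ and, after rescaling $t_0$, preserving the chosen $a_2$.

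Finally I would remove the residual freedom in $a_3$. Two cubics cutting out the fixed curve $C(X,q)$ on $Q$ differ by $a_2 l$ for a linear form $l$ in $t_1,\ldots,t_4$, since their difference is a cubic vanishing on $Q=V(a_2)$. The substitution $t_0\mapsto t_0+l$ is a projective transformation fixing $q$ that carries $t_0a_2+a_3$ to $t_0a_2+(a_3+l a_2)$, absorbing exactly this ambiguity. After these three normalizations the equation of $X$ is completely determined, and the identical normal form results starting from any 10-nodal cubic; hence any two are projectively isomorphic.

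The step demanding genuine care is the second. I must check that nine nodes on a reduced $(3,3)$ curve really do force the rigid grid of six lines, with no degenerate reduced alternative, and that the ruling-preserving automorphisms I invoke truly lift to $\bbP^4$ while simultaneously fixing $q$ and normalizing $a_2$. The remaining manipulations are essentially bookkeeping.
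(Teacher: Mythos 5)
Your proof is correct and follows essentially the same three-step normalization as the paper: fix $a_2$ by projective equivalence of smooth quadrics, fix the six-line configuration $C(X,q)$ by an automorphism of $Q$ (you supply the sharp $3$-transitivity of $\PGL_2$ on each ruling, which the paper leaves implicit), and absorb the residual ambiguity $a_3\mapsto a_3+la_2$ via the substitution $t_0\mapsto t_0+l$. The care you flag for the second step checks out, since a reduced $(3,3)$ curve with nine nodes must split as three lines from each ruling and the ruling-preserving automorphisms of $Q$ extend to $\bbP^4$ fixing $q$ and the hyperplane $H$.
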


We choose one representative of the isomorphism class and denote it by $\sfS_3$. Any cubic threefold isomorphic to $\sfS_3$ is called a \emph{Segre cubic primal}. We refer to \cite{CAG}, 9.4.4 for many beautiful classical facts about such threefolds.  Here we add some more.  

 The polar quadric $V(\frac{\partial}{\partial t_0})$ of $X$ at a node $q$ cuts out in $X$ a surface of degree 6 with a point of multiplicity 4 at $q$ which is projected isomorphically outside $q$ onto the quadric $V(a_2)$ and passes through the nodes. This shows that the pre-image of each line component of $C(X,q)$ is a plane in $\sfS_3$ containing 4 nodes. Also it shows that no three nodes are collinear. Note that a cubic threefold containing a plane can be written in appropriate projective coordinates by equation $xQ_1+yQ_2 = 0$, where $Q_1$ and $Q_2$ are quadratic forms and the plane is given by $x = y = 0$. It follows immediately that the  points given by $x= y=Q_1 =Q_2 = 0$ are singular points of the threefold. Their  number is less than or equal to 4.  Thus we obtain that each node is contained in 6 planes and each plane contains 4 nodes. This gives the following.

\begin{proposition} The Segre cubic primal $\sfS_3$ contains 10 nodes and 15 planes that form an abstract configuration $(15_4,10_6).$
\end{proposition}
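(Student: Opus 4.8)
The plan is to reduce the statement to two local incidence counts and then recover the number of planes by a double count; the count of ten nodes is already established. First I would determine all planes of $\sfS_3$ through a fixed node $q=[1,0,0,0,0]$. Parametrizing a $2$-plane $\Pi$ through $q$ as $[\lambda:\mu p_1:\mu p_2:\mu p_3:\mu p_4]$, where $[p_1:p_2:p_3:p_4]$ runs over the line $\ell=\Pi\cap H$, and substituting into the equation $t_0a_2+a_3=0$ of \eqref{cubic1}, the restriction of the cubic form to $\Pi$ factors as $\mu^2\bigl(\lambda\, a_2(p)+\mu\, a_3(p)\bigr)$. This vanishes identically on $\Pi$ precisely when $a_2$ and $a_3$ both vanish along $\ell$, i.e. when $\ell\subset V(a_2)\cap V(a_3)=C(X,q)$; since $\ell$ is a line, this means $\ell$ is one of the line components of $C(X,q)$. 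The construction applies at any of the ten nodes, because the remaining nine nodes project to the nine singular points of the corresponding associated curve, forcing that curve to split into $6$ lines. Hence each node lies on exactly $6$ planes, namely the cones over the six grid lines with that node as vertex.

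Next I would verify that every plane of $\sfS_3$ meets the nodes in exactly $4$ points. The bound ``at most $4$'' is the computation recalled just before the statement: writing the cubic as $xQ_1+yQ_2=0$ with the plane equal to $\{x=y=0\}$ confines the singular locus on the plane to $\{x=y=Q_1=Q_2=0\}$, the intersection of two conics, hence to at most $4$ points. For the exact value I would use the explicit description above: the plane attached to a grid line $\ell$ contains $q$ together with the three nodes of $\sfS_3$ projecting to the three grid points lying on $\ell$, so it contains exactly $4$ nodes. Moreover two conics in $\bbP^2$ always meet by B\'ezout, so no plane of $\sfS_3$ can avoid the nodes; thus every plane is a cone through some node and therefore carries exactly $4$ nodes.

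Finally I would carry out the double count. Let $I=\{(q,\Pi): q \text{ a node of } \sfS_3,\ \Pi \text{ a plane of } \sfS_3,\ q\in\Pi\}$. Summing over the ten nodes gives $|I|=10\cdot 6=60$, while summing over the planes gives $|I|=4\cdot(\text{number of planes})$, so there are exactly $15$ planes. Reading off the incidence data abstractly — fifteen planes each incident to four nodes and ten nodes each incident to six planes, with $15\cdot 4 = 60 = 10\cdot 6$ — is precisely the configuration $(15_4,10_6)$.

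The one point I expect to require genuine care is the identification of the planes through $q$ with the components of $C(X,q)$: one must track the factor $\mu^2$ correctly, so that the condition becomes the honest containment $\ell\subset V(a_2)\cap V(a_3)$, and one must check that the six resulting cones are pairwise distinct and exhaust the planes through $q$. Once that bijection is secured, both incidence numbers are clean and the double count delivering $15$ is immediate.
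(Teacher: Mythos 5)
Your proof is correct and follows essentially the same route as the paper: the six planes through a fixed node are the cones over the six line components of $C(X,q)$, each plane of $\sfS_3$ carries at most four nodes by the $xQ_1+yQ_2$ argument, and the incidence count $10\cdot 6=4\cdot 15$ yields the fifteen planes and the $(15_4,10_6)$ configuration. The only differences are cosmetic: you identify the cones by direct substitution where the paper cuts $X$ with the polar quadric $V(\partial/\partial t_0)$ (whose intersection with $X$ is precisely the cone over $C(X,q)$), and you add the welcome explicit check, via B\'ezout applied to the two conics $Q_1=Q_2=0$, that every plane of $\sfS_3$ passes through some node, a point the paper leaves implicit.
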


This synthetic argument belongs to Segre. Guido Castelnuovo \cite{Cast} and later, but independently,  H. W. Richmond \cite{Richmond}, were able to find the following 
$\frakS_6$-symmetric equations of the Segre cubic (see \cite{CAG}, 9.4.4).

\begin{equation}\label{segre2}
\sum_{i=0}^5t_i^3 = \sum_{i=0}^5t_i = 0.
\end{equation}
 
It is checked that the singular points form the $\frakS_6$-orbit of the point  
$[1,1,1,-1,-1,-1]$ and the planes form the $\frakS_6$-orbit  of the plane 
 $t_0+t_1 = 0, t_2+t_3= 0, t_4+t_5 = 0.$

In his paper  Castelnuovo studies linear systems of complexes of lines in $\bbP^4$, i.e. linear systems of hyperpane sections of the Grassmannian variety $G_1(\bbP^4)$ in its 
Pl\"ucker embedding in $\bbP^9$ (see \cite{CAG}, 10.2). A 3-dimensional linear system of such complexes defines a rational map
$$f:\bbP^3\dasharrow \bbP^4$$
which is given by pfaffians of principal $4\times 4$-matrices of a skew-symmetric matrix of size $5\times 5$ whose entries are linear forms in 4 variables. 
There are five points of indeterminacy  $q_1,\ldots,q_5$ of this map corresponding to skew-symmetric matrices of rank 2. The linear system defining the map $
f$ is equal to the linear system $|2h-q_1-\cdots-q_5|$ of quadrics passing through the points $q_1,\ldots,q_5$, where $h$ is the class of a plane in $\bbP^3$. If one chooses projective coordinates such that 
the points $q_i$ become $[1,0,0,0],\ldots,[0,0,0,1], [1,1,1,1]$, then the quadrics from the linear system acquire equations of the following type
$$\sum_{0\le i<j\le 3}a_{ij}u_iu_j = 0,$$
where $\sum a_{ij} = 0.$
The 5-dimensional vector space of such quadrics form the fundamental irreducible representation of the group $\frakS_6$. This shows that the image of the 
map $f$ admits a $\frakS_6$-symmetry. Castelnuovo and Richmond find a special basis in the linear system of quadrics to show that the image of the map $f$ can be given by the equations \eqref{segre2}. The images of the 10 lines 
$\overline{q_i,q_j}$ are the ten nodes of the cubic. The images $\Pi_{i,j,k}$ of the ten planes $\overline{q_i,q_j,q_k}$ 
and the images $\Pi_i$ of the five exceptional divisors $E_i$  blown-up from the points $q_i$ are the 15 planes of the cubic. 

In an earlier work of P. Joubert \cite{Joubert}, the equations \eqref{segre2} appear as the relations between 
certain six polynomials in roots of a general equation of degree 6. It shows that $\sfS_3$ should be considered as 
the set of ordered sets of 6 points in $\bbP^1$ modulo projective equivalence. Later A. Coble \cite{Coble1} made it more 
precise by proving that the geometric invariant quotient of $(\bbP^1)^6$ by the group $\SL(2)$ is isomorphic 
to the Segre cubic 
$$\sfS_3\cong \sfP_1^6:= (\bbP^1)^6/\!/\SL_2.$$

The rational map $f:\bbP^3\da \sfS_3$ can be extended to a regular map $\tilde{f}:X\to \sfS_3$, where $X\to \bbP^3$ is the 
composition of the blow-up of the five points $q_1,\ldots,q_5$ and the blow-up of the proper transforms of the lines 
$\overline{q_i,q_j}$. 

\beq\label{kap1}
\xymatrix{\bar{M}_{0,6}\ar[dr]\ar[rr]&&\bbP^3\ar[dl]^f\\
&\sfS_3&}.
\eeq

In modern times, the variety $X$  appears as the special case of M. Kapranov's realization  of the Knudsen-Mumford moduli space $\overline{\calM}_{0,6}$ of stable rational curves with $6$ marked ordered points \cite{Kapranov}. For any $n\ge 4$,  one chooses $n-1$ points in general linear position in $\bbP^{n-3}$, then start blowing up the points, then the proper transforms of lines joining two points, then proper transforms of planes joining 3 points, and so on. The result is isomorphic to the Knudsen-Mumford moduli space $\overline{\calM}_{0,n}$ of stable rational curves with $n$ marked ordered points. If one chooses a general point $q$ in $\bbP^n$ and passes through it and the points $q_1,\ldots,q_{n+2}$ the unique normal rational 
curve $C(q)$ of degree $n$, then one finds $n$ points on $C(q)\cong \bbP^1$, namely, the points $q_1,\ldots,q_n$ and the point $q$. 

It follows from  Kapranov's realization of  the variety $\overline{M}_{0,n}$ as a Chow quotient that we have the following commutative triangle of  regular maps

$$
\xymatrix{\bar{M}_{0,n}\ar[rr]^{\pi}\ar[dr]&&\bbP^{n-3}\ar[dl]_{f_n}\\
&\sfP_1^{n}:= (\bbP^1)^n/\!/\SL(2)&},
$$
If $n = 6$, the image of $f_n$ is the Segre cubic $\sfS_3$. 

In the case when $n = 2g+2$ is even, the geometric invariant theory quotient 
$\sfP_1^{2g+2}:= (\bbP^1)^{2g+2}/\!/\SL(2)$ is isomorphic to a compactification of the moduli space of hyperelliptic curves of genus $g$ together with a full 2-level structure, i.e. a choice of a standard symplectic basis in the group of 2-torsion divisor classes. In \cite{Coble2} A. Coble shows that the map $f_{2g+2}:\bbP^{2g-1}\da \sfP_1^{2g+2}$ is given by the linear system
\begin{equation}\label{coble3}
|gh-(g-1)(q_1+\cdots+q_{2g+1})|.
\end{equation}
It maps $\bbP^{2g-1}$ to the projective space $\bbP^{N-1}$, where $N = \binom{2g}{g}-\binom{2g}{2g-2}$.
 The image of this 
map is  isomorphic to $\sfP_1^{2g+2}$. 

Also, Coble shows that the projection of $\sfP_1^{2g+2}$ from a general point $p$ defines a degree 2 map onto the Kummer variety 
$\Kum(\Jac(C_p))$ associated with the Jacobian variety of the hyperelliptic curve $C_p$ corresponding to the point $p$. 
It lies in $\bbP^{2^g-1}$ and embeds there by the map $\Jac(C_p)\to \Kum(\Jac(C_p)) \to \bbP^{2^g-1}$ given by the linear system $|2\Theta|$, where $\Theta$ is the theta divisor of the Jacobian. The locus of singular points of the hypersurfaces from the linear system $|gh-(g-1)(q_1+\cdots+q_{2g+1})-q|$, where $f_{2g+2}(q) = p$, is the \emph{Weddle variety} $W_g$. It maps birationally onto the Kummer variety of $C_p$. We refer for a modern exposition of Coble's results to \cite{Dolg} and C. Kumar's paper \cite{Kumar}. Kumar calls the variety 
$\sfP_1^{2g+2}$ the \emph{generalized Segre variety}. One finds in Kumar's paper a nice relationship between the generalized Segre variety and the theory of vector bundles on hyperelliptic curves.

Note that Segre himself was aware of the relationship between the cubic $\sfS_3$ and the Kummer quartic surface  associated to curves of genus 2. In fact, he shows that the projection of $\sfS_3$ to $\bbP^3$ from its nonsingular point is the quartic Kummer surface $\Kum(\Jac(C_p)))$. He also shows that the set of nodes of quadrics in $\bbP^3$ passing through the points $q_1,\ldots,q_5$ and the additional point $q$ is the Weddle quartic surface.

A nice relationship between the generalized Segre variety $\sfP_1^{2g+2}$ and the theory of stable rank 2 vector bundles on not-necessary hyperelliptic curves was recently studied by A. Alzati and M. Bolognesi \cite{Alzati}.

Let 
$C_g$ be a non-hyperelliptic smooth projective curve of genus $g\ge 2$ and let  $\SU_{C_g}(2)$ be the moduli space of semi-stable rank 2 bundles on $C_g$ with trivial determinant. Alzati and Bolognesi prove that there exists a rational map 
$$\SU_{C_g}(2)\dasharrow \bbP^g$$
whose fibers are birationally isomorphic to $\sfP_1^{2g+2}$. If $g = 3$, they are isomorphic to $\sfS_3$.

Finally note that, according to \cite{Fink1}, the Segre cubic primal has 1024 small resolutions with 13 isomorphism classes, among them are 332  projective varieties which are divided into 6 isomorphism classes. Since $C(\sfS_3,q)$ has 6 irreducible components, the rank of the Picard group of a projective small resolution is equal to 5.
 
\section{Cubic threefolds with 6 nodes}
Let $X$ be a cubic hypersurface in $\bbP^4$ with six nodes. Choosing one of the nodes of $X$, we can get the equation of $X$ as in \eqref{cubic1}.  Let $Q = V(a_2)$ and $F = V(a_3)$ be the quadric and a cubic surface in the hyperplane $t_0 = 0$ defined by the coefficients $a_2$ and $a_3$ of the equation.

\begin{proposition}\label{nondeg} Assume that any five of the  nodes of $X$ span $\bbP^4$. Then $C(X,q) = Q\cap F$ is the union of two rational curves of degree 3  intersecting transversally at 5 points. 
\end{proposition}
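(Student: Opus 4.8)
The plan is to identify the five nodes of $X$ other than $q$ with the singular points of $C(X,q)$, translate the nondegeneracy hypothesis into a general-position statement for these five points on $Q$, and then run a genus/bidegree bookkeeping that forces the splitting $C(X,q) = C_1\cup C_2$ into two twisted cubics. As recalled in the introduction, each node $q_i\ne q$ of $X$ is carried by $\pr_q$ to an ordinary double point of $C(X,q)$, and through the isomorphism $X'\cong \mathrm{Bl}_{C(X,q)}\bbP^3$ these are precisely the singular points of $C(X,q)$. Thus $C:=C(X,q)$ is a reduced, connected (being an ample $(3,3)$-divisor on $Q$) curve with exactly five nodes $p_1,\dots,p_5$ and no worse singularities.

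Next I would translate the hypothesis. If some four of $p_1,\dots,p_5$ lay in a plane $\Pi\subset H$, then the corresponding four nodes $q_i,q_j,q_k,q_l$ together with $q$ would lie in the hyperplane $\langle q,\Pi\rangle\subset\bbP^4$, contradicting that any five nodes span $\bbP^4$. The same cone-over-$q$ argument shows $p_1,\dots,p_5$ span $H=\bbP^3$ and that no three of them are collinear. So the five nodes are in general linear position in $\bbP^3$.

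Now the bookkeeping. Since the arithmetic genus of a $(3,3)$-curve is $4$ while $C$ has five nodes, $C$ is reducible; writing $C=\bigcup_{i=1}^r C_i$ with components whose normalizations have genus $g_i$, the genus formula $p_a(C)=\sum g_i+\delta-r+1$ with $\delta=5$ yields $\sum g_i=r-2$, so $r\ge 2$. The crucial step is to exclude ruling lines as components: if a line $\ell$ of one ruling were a component, it would meet $C-\ell$ in three points (counted with multiplicity) all lying on $\ell$, and since every singular point of $C$ is an ordinary double point each crossing has local intersection multiplicity one (a tangency would give a tacnode, a triple point would not be a node), producing three collinear nodes, a contradiction. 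Hence every component has bidegree $(a,b)$ with $a,b\ge 1$, forcing $r\le 3$. If $r=3$ the only option is three $(1,1)$-conics, giving $\sum g_i=0\ne 1$; so $r=2$ and both components are rational. The two admissible splittings are $(1,1)+(2,2)$ and $(1,2)+(2,1)$, and I would eliminate the former because its $(1,1)$-component is a plane conic meeting the other component in four coplanar nodes, again contradicting general position.

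This leaves $C=C_1\cup C_2$ with $C_1$ of bidegree $(1,2)$ and $C_2$ of bidegree $(2,1)$, hence two smooth rational cubics (twisted cubics, since $p_a=0$ in each bidegree), meeting in $C_1\cdot C_2=1\cdot 1+2\cdot 2=5$ points; as $C$ has only ordinary double points, these five intersections are transversal, which is the assertion. I expect the main obstacle to be the exclusion step: one must be careful that the node-only hypothesis on $C$ rigidly controls the local intersection multiplicities, so that no ruling-line component and no $(1,1)$-conic component can occur, and that the general-position translation of ``any five nodes span $\bbP^4$'' is applied to the correct subsets of nodes.
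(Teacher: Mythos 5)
Your argument is correct and follows essentially the same route as the paper's: identify the five remaining nodes with the five ordinary double points of $C(X,q)$, translate the spanning hypothesis into general position of these points on $Q$ (no four coplanar, no three collinear, all distinct), and use the arithmetic genus of a $(3,3)$-curve together with the exclusion of ruling-line and conic components to force the splitting into a $(2,1)$- and a $(1,2)$-curve meeting transversally at $5$ points. You merely spell out the case analysis that the paper compresses into ``an easy computation,'' including the explicit elimination of the $(1,1)+(2,2)$ and triple-conic decompositions.
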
 

\begin{proof} We know that  remaining 5 nodes  of $X$ are projected to the double points $p_1,\ldots,p_5$ of the curve 
$C(X,q)$. By assumption, any four of them span $\bbP^3$. It implies that no four of the points $p_i$ lie on a conic or on a line contained in $Q$, and hence the curve $C(X,q)$ has no irreducible components of degree $\le 2$. Then an easy computation with the formula for the arithmetic genus of $C$ gives that $C$ consists of two components $\gamma$ and $\gamma'$ of degree 3.  Since the quadric $Q$ is nonsingular (otherwise $q$ is not an ordinary double point), the curves are curves of types $(2,1)$ and $(1,2)$ intersecting at the points $p_1,\ldots,p_5$. These points are the projections of the five nodes of $X$ and if two of them coincide,  together with $q$, they would lie on a line. This contradicts our assumption.
\end{proof}

 The following example shows that the condition on the six points is necessary for $C(X,q)$  to be the union of two curves of degree 3.

\begin{example}  Let $Q = V(a_2)$ be a nonsingular quadric and $C$ be  a curve on $Q$ equal to the union of a nonsingular conic $C_1$ and an irreducible curve $C_2$ of type $(2,2)$  with a node. Let $H= V(l)$ be a plane intersecting $Q$ along  $C_1$ and let $Q' = V(q)$  be a quadric intersecting $Q$ along  $C_2$.  Pick up a general plane $H' = V(l')$ such that $F = V(a_2l'+ql)$ is a nonsingular cubic. Then the cubic threefold given by equation \eqref{cubic1} with $a_2,a_3$ as above, has 6 nodes with 5 of them spanning $\bbP^3$ (four of them are projected to the intersection points $C_1\cap C_2$). 

Let $X$ be any 6-nodal cubic such that five of its nodes $q_1,\ldots,q_5$ span a hyperplane $H$. The intersection $H\cap X$ is a cubic surface with 5 nodes, hence it must be reducible. It is easy to see that this implies that four of the nodes are coplanar. In particular, any five nodes of $X$ span a hyperplane. 
\end{example}

We say that a 6-nodal cubic threefold is \emph{nondegenerate} if any subset of five nodes span $\bbP^4$. Thus Proposition \ref{nondeg} applies, and we obtain that the projection from any node gives an equation \eqref{cubic1}, where $V(a_2)\cap V(a_3)$ is the union of two rational cubics intersecting at 5 points.

Note that any 5-nodal complete intersection of a quadric and a cubic surfaces in $\bbP^3$ defines a 6-nodal cubic threefold, and the case from Proposition \ref{nondeg} is the only case which gives a nondegenerate 6-nodal cubic threefold.

We shall use the following well-known fact about cubic surfaces several times, so better let us record it.

\begin{lemma}\label{fact} Let $F$ be a smooth cubic surface and $\gamma_1,\gamma_2$ be two rational smooth curves of degree 3 on $F$ such that $\gamma_1+\gamma_2\in |-2K_F|$. Then the set of 27 lines on $F$ is divided into three disjoint sets:
\begin{itemize}
\item 6 skew lines that do not intersect $\gamma_1$ but intersect $\gamma_2$ with multiplicity 2,
\item 6 skew lines that do not intersect $\gamma_2$ but intersect $\gamma_1$ with multiplicity 2,
\item 15 lines that intersect both $\gamma_1$ and $\gamma_2$ at one point. 
\end{itemize}
\end{lemma}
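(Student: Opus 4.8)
The plan is to work entirely in the Picard lattice. Realize $F$ as $\bbP^2$ blown up at six general points, so that $\Pic(F)$ is the unimodular lattice with orthogonal basis $h, e_1, \ldots, e_6$ ($h^2 = 1$, $e_i^2 = -1$) and $K_F = -3h + e_1 + \cdots + e_6$. Recall that the $27$ lines are exactly the classes $\ell$ with $\ell^2 = -1$ and $\ell \cdot K_F = -1$, namely the six $e_i$, the fifteen $h - e_i - e_j$, and the six $2h - \sum_{k \ne m} e_k$. The first step is to pin down the numerical type of the two cubics: since each $\gamma_a$ is smooth of degree $3$ we have $\gamma_a \cdot (-K_F) = 3$, and since each is rational, adjunction $2g-2 = \gamma_a \cdot (\gamma_a + K_F)$ forces $\gamma_a^2 = 1$. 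From $\gamma_1 + \gamma_2 = -2K_F$ and $(-2K_F)^2 = 12$ one then reads off $\gamma_1 \cdot \gamma_2 = 5$, recovering the five intersection points of Proposition \ref{nondeg}.

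Next I would extract the trichotomy. For any one of the $27$ lines $\ell$ we have $\ell \cdot \gamma_1 + \ell \cdot \gamma_2 = \ell \cdot (-2K_F) = 2$. Because each $\gamma_a$ is irreducible of degree $3$ and $\ell$ is a line, $\ell$ is not a component of $\gamma_a$, so $\ell \cdot \gamma_a \ge 0$. Hence $(\ell \cdot \gamma_1, \ell \cdot \gamma_2)$ is one of $(0,2)$, $(1,1)$, $(2,0)$, and these are precisely the three bulleted types: $\ell \cdot \gamma_a = 0$ means $\ell$ is disjoint from $\gamma_a$, $\ell \cdot \gamma_a = 2$ means $\ell$ meets $\gamma_a$ with multiplicity $2$, and $(1,1)$ means $\ell$ meets each once.

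For the cardinalities I would use a Weyl-averaging identity rather than case analysis. Writing $n_0, n_1, n_2$ for the number of lines with $\ell \cdot \gamma_1 = 0, 1, 2$, the sum of all $27$ lines is $-9K_F$, so $\sum_\ell \ell \cdot \gamma_1 = -9 K_F \cdot \gamma_1 = 27$; moreover the $W(E_6)$-invariant symmetric form $B(x,y) = \sum_\ell (\ell \cdot x)(\ell \cdot y)$ must equal $-6\,(x \cdot y) + 5\,(x \cdot K_F)(y \cdot K_F)$ (the invariant forms span a $2$-dimensional space, being the intersection form and $(x\cdot K_F)(y\cdot K_F)$; the coefficients are fixed by testing on $K_F$ and on a root $e_1 - e_2$). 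Thus $\sum_\ell (\ell \cdot \gamma_1)^2 = -6 + 45 = 39$, and solving $n_0 + n_1 + n_2 = 27$, $n_1 + 2n_2 = 27$, $n_1 + 4n_2 = 39$ gives $n_2 = 6$, $n_1 = 15$, $n_0 = 6$, as claimed.

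It remains to show that the two sextuples consist of pairwise skew lines, and here I would pass to a normal form. The group $W(E_6)$ acts transitively on the $72$ twisted-cubic classes (the stabilizer of $h$ is the copy of $\frakS_6$ permuting the $e_i$, of index $72$), so after an element of $W(E_6)$ I may assume $\gamma_1 = 2h - e_1 - e_2 - e_3$, whence $\gamma_2 = 4h - e_1 - e_2 - e_3 - 2e_4 - 2e_5 - 2e_6$. A direct computation identifies the six lines disjoint from $\gamma_1$ as $e_4, e_5, e_6$ together with $h - e_1 - e_2, h - e_1 - e_3, h - e_2 - e_3$, and one checks that any two of these have intersection number $0$; the six lines disjoint from $\gamma_2$ are handled identically, or by the symmetry $\gamma_1 \leftrightarrow \gamma_2$. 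The only genuinely non-mechanical ingredients are the two structural facts invoked — the transitivity of $W(E_6)$ on twisted cubics (needed for the normal form) and the positivity $\ell \cdot \gamma_a \ge 0$ (needed for the trichotomy, and resting on irreducibility of the $\gamma_a$); everything else is bookkeeping in the lattice.
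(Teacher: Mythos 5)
Your proof is correct, and it lives in the same world as the paper's (a computation in the Picard lattice of intersection numbers with the $27$ exceptional classes), but the route is genuinely different. The paper adapts the basis to the data from the start: since $\gamma_1^2=1$ and $\gamma_1\cdot K_F=-3$, the system $|\gamma_1|$ defines a birational morphism $\pi_1:F\to\bbP^2$, and in the geometric basis $(e_0,e_1,\dots,e_6)$ it determines one has $\gamma_1\sim e_0$, $\gamma_2\sim 5e_0-2(e_1+\cdots+e_6)$; the trichotomy and all the counts are then read off by listing the $27$ classes, and as a bonus the two sextuples are exhibited as the double-six blown down by $\pi_1$ and $\pi_2$ --- a fact the paper reuses later (Schur quadric, determinantal representations). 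You instead keep a generic basis, get the trichotomy from $\ell\cdot(\gamma_1+\gamma_2)=\ell\cdot(-2K_F)=2$ plus nonnegativity, count via the $W(E_6)$-invariant form $B(x,y)=-6(x\cdot y)+5(x\cdot K_F)(y\cdot K_F)$ (your coefficients check out: $B(K_F,K_F)=27$ and $B(e_1-e_2,e_1-e_2)=12$ pin down $a=-6$, $b=5$), and only then pass to a normal form for the skewness claim, justified by transitivity of $W(E_6)$ on the $72$ classes with $D^2=1$, $D\cdot K_F=-3$. Everything you assert is true, and the reduction by a lattice isometry is legitimate since all claims are intersection-theoretic. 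Two small remarks: your Weyl-averaging step is elegant but redundant, since once you have the normal form (needed anyway for skewness) the counts $6+6+15$ fall out of the same enumeration; and you could have taken the normal form $\gamma_1=h$ rather than $2h-e_1-e_2-e_3$, which is exactly the paper's choice and makes the six disjoint lines simply $e_1,\dots,e_6$, recovering the double-six structure that the abstract counting argument obscures.
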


\begin{proof} We have $\gamma_i\cdot K_F = -3$, hence $\gamma_i^2 = 1$ and the linear system $|\gamma_i|$ defines a birational morphism $\pi_i:F\to \bbP^2$. Each of these morphisms blows down six lines that together form a double-six. In the standard (geometric) basis 
$(e_0,e_1,\ldots,e_6)$  in $\Pic(F)$ defined by $\pi_1$, we have $\gamma_1\sim e_0$ and $\gamma_2\in |5e_0-2(e_1+\cdots+e_6)|$. The double-six is represented by the classes $e_i$ and 
$2e_0-(e_1+\cdots+e_6)+e_i, i = 1,\ldots,6$. Each line in the class $e_i$ is blown down by $\pi_1$ and intersects $\gamma_2$ with multiplicity 2. The remaining lines are represented by the classes $e_0-e_i-e_j$ which intersect $\gamma_1$ and $\gamma_2$ with multiplicity 1. 
\end{proof}

 Let $\Bis(C_i)$ be the surface of bisecant lines of $C_i$. It is naturally isomorphic to the symmetric product of $C_i$, and hence to $\bbP^2$. In the Pl\"ucker embedding of the Grassmann variety $G_1(\bbP^3) \subset \bbP^5$, it is isomorphic to a Veronese surface. Let $\ell_1,\ldots,\ell_6$ (resp. $\ell_1',\ldots,\ell_6$) be the set of lines in $\Bis(C_1)$ (resp. $\Bis(C_2)$)  corresponding to a double-six of lines on $F$ via the previous lemma.  For any $x\in F$, 
there exists a  unique bisecant line of $C_1$ (resp. $C_2$) passing through $x$. This defines two maps $\pi_1:F\to \Bis(C_1), \pi_2:F\to \Bis(C_2)$ that blows down the lines
$\ell_1,\ldots,\ell_6$ (resp. $\ell_1',\ldots,\ell_6'$).

\begin{proposition}\label{nonsing} Let $X$ be a nondegenerate 6-nodal cubic hypersurface and let $C(X,q) = Q\cap F$ be the fundamental curve associated to $q$.  Let $S$ be the blow-up of $Q$ at  the five nodes $q_1,\ldots,q_5$ of $C(X,q)$. Then $S$ is isomorphic to a nonsingular cubic surface.
\end{proposition}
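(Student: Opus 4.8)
The plan is to show that $S$ is a del Pezzo surface of degree $3$; since the anticanonical system of such a surface embeds it into $\bbP^3$ as a nonsingular cubic surface, this will suffice. Recall from the proof of Proposition \ref{nondeg} that $Q$ is a \emph{nonsingular} quadric (otherwise $q$ would not be an ordinary double point), so $Q\cong \bbP^1\times\bbP^1$ with $K_Q=\calO_Q(-2,-2)$ and $K_Q^2=8$. As $S$ is the blow-up of $Q$ at the five distinct points $q_1,\dots,q_5$, we have $K_S^2=8-5=3$ and $-K_S=\pi^*\calO_Q(2,2)-E_1-\cdots-E_5$, where $\pi:S\to Q$ is the blow-down and $E_i=\pi^{-1}(q_i)$. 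It remains to prove that $-K_S$ is ample, for then $S$ is del Pezzo of degree $3$. By the usual criterion it is enough to check that $-K_S\cdot\Gamma>0$ for every irreducible curve $\Gamma$, and a standard bookkeeping of curve classes on the blow-up of $\bbP^1\times\bbP^1$ at five points reduces this to two conditions: \emph{(a)} no two of the $q_i$ lie on a common line (ruling) of $Q$, and \emph{(b)} no four of the $q_i$ lie on a conic (a curve of type $(1,1)$) on $Q$. Indeed, a ruling through $k$ of the points has anticanonical degree $2-k$ and a $(1,1)$-curve through $k$ of them has anticanonical degree $4-k$, and these are the only classes that can fail to be $-K_S$-positive.

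Condition \emph{(b)} is immediate from nondegeneracy: it was shown in the proof of Proposition \ref{nondeg} that no four of the points $q_1,\dots,q_5$ are coplanar, and a conic on $Q$ spans a plane, so no four of them can lie on a conic. The main point, and the step I expect to be the crux, is condition \emph{(a)}, which must be extracted from the bidegree structure of $C(X,q)$ rather than from a naive general-position hypothesis. By Proposition \ref{nondeg} we have $C(X,q)=\gamma\cup\gamma'$ with $\gamma$ of type $(2,1)$ and $\gamma'$ of type $(1,2)$, meeting transversally exactly at $q_1,\dots,q_5$; in particular each $q_i$ lies on both $\gamma$ and $\gamma'$. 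Suppose two of them, say $q_i$ and $q_j$, lay on a common line $\ell$ of $Q$. The line $\ell$ belongs to one of the two rulings, and because $\gamma,\gamma'$ have types $(2,1)$ and $(1,2)$, exactly one of these curves meets $\ell$ in a single point while the other meets it in two. Taking the curve, say $\gamma$, with $\gamma\cdot\ell=1$, we would have $q_i,q_j\in\gamma\cap\ell$, forcing $\gamma\cdot\ell\ge 2$, a contradiction. Hence no two of the $q_i$ share a ruling, establishing \emph{(a)}.

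With \emph{(a)} and \emph{(b)} in hand, $-K_S$ has positive degree on every irreducible curve and $K_S^2=3$, so $S$ is a del Pezzo surface of degree $3$; the embedding by $|-K_S|$ realizes it as a nonsingular cubic surface in $\bbP^3$, as claimed. I would present the routine verification that \emph{(a)} and \emph{(b)} exhaust the possible nonpositive classes (equivalently, that they guarantee the absence of $(-2)$-curves and of curves of negative anticanonical degree) by appeal to the standard classification of del Pezzo surfaces as blow-ups of $\bbP^1\times\bbP^1$, so that the only genuinely geometric input is the bidegree computation of the previous paragraph.
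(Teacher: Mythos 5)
Your proof is correct, and it reaches the conclusion by a route that differs in packaging from the paper's. The paper likewise works with the anticanonical system $|\calO_Q(2,2)-p_1-\cdots-p_5|$, but instead of verifying ampleness of $-K_S$ it exhibits the $27$ lines on the image cubic surface explicitly (the five exceptional curves, the ten rulings through one of the points, the ten conics through three of them, and the images of $C_1,C_2$) and concludes nonsingularity from their presence; your version replaces this line count by the Nakai--Moishezon check that the only classes of non-positive anticanonical degree are rulings through two of the points and $(1,1)$-curves through four of them. The underlying general-position input is the same in both arguments, but your write-up genuinely adds something at the crux you identified: the paper dismisses ``no two of the points lie on one ruling'' as obvious, whereas you derive it from the bidegrees $(2,1)$ and $(1,2)$ of the two components of $C(X,q)$ together with the fact that every node lies on both components (even more directly: a ruling meets the $(3,3)$-curve $C(X,q)$ with total multiplicity $3$, while each node on it contributes $2$, so it contains at most one node). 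Your condition \emph{(b)} is exactly the ``no four of the $p_i$ on a conic'' statement already established in the proof of Proposition \ref{nondeg}, so citing it is legitimate, and the deferred verification that rulings and $(1,1)$-classes are the only possible obstructions does follow from the standard genus--degree estimate for irreducible curves on $\bbP^1\times\bbP^1$ through five points. What the paper's approach buys in exchange is the explicit identification of the $27$ lines, which is then used repeatedly in the later sections on the Fano surface and the conic bundles; your approach is cleaner as a self-contained proof but would need that dictionary to be set up separately.
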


\begin{proof}  The linear system of curves $|D| = |\calO_Q(2)-p_1-\cdots-p_5|$ of bidegree $(2,2)$ containing the five nodes $p_1,\ldots,p_5$ of $C(X,q)$ is 3-dimensional and defines a regular map $f:S\to S'$ onto a cubic surface $S' \subset |D|^* \cong \bbP^3$. Since $X$ is nondegenerate, the images of the 5 exceptional curves $E(q_i)$, the 10 lines each passing through one of the points $p_i$ (obviously, no two of them lie on one line), 10 conics passing through three of the points $q_i$, and the curves $C_1,C_2$ are the 27 lines on $S'$. This implies that $S'\cong S$ is a nonsingular cubic surface. 
\end{proof}

Let is add some remarks to the previous construction. The lines $\ell_1 = f(C_1)$ and $\ell_2 = f(C_2)$ are skew lines on $S'$, and the images of the exceptional curves $E(q_i)$ is the set of 5 skew lines that intersect 
both $\ell_1$ and $\ell_2$. The composition 
$f' = \sigma\circ f^{-1}:S'\to Q \cong \bbP^1\times \bbP^1$  is given by the two pencils  of conics cut out by the pencils of planes $\calP_1,\calP_2$ in $\bbP^3$ containing $\ell_1,\ell_2$, respectively. This map factors through an isomorphism $f^{-1}:S'\to S$ which is inverse of the morphism $f:S\to S'$.

For any $x\in S'$ there exists a unique line in $\bbP^3$ intersecting $\ell_1$ and $\ell_2$. It is obvious, if $x\not\in \ell_1\cup \ell_2$. If $x\in \ell_1$, we choose the line contained in the plane spanned by $\ell_2$ and $x$ and in the plane containing $\ell_1$ and tangent to $S'$ at $x$. Assigning to $x$ the intersection points of this line with $\ell_1$ and $\ell_2$, we obtain that the surface $S'$, and hence $S$, is isomorphic to the irreducible surface $S''$ in $G_1(\bbP^3)$ of lines in $\bbP^3$ that intersects both components of $C(X,q)$ The lines passing through the singular points of $C(X,q)$ must be tangent to the quadric at these points. Such lines are 5 lines on $S''$ that correspond to the skew lines intersecting $\ell_1$ and $\ell_2$.

Segre proves  that any nondegenerate 6-nodal cubic hypersurface can be projectively generated. This means that one can find three projectively equivalent nets of hyperplanes.
$$H(\lambda)_j:= \sum_{i=0}^4a_i^{(j)}(\lambda)t_i = 0, \quad j = 0,1,2,$$ 
such that 
$$X = \{x\in \bbP^4:x\in H_1(\lambda)\cap H_2(\lambda)\cap H_3(\lambda)\  \textrm{for some $\lambda$}\}.$$
Let us rewrite these equations in the following form
$$\lambda_0l_{0j}(t)+\lambda_1l_{1j}(t)+\lambda_2l_{2j}(x) = 0, \ j = 0,1,2,$$
where $l_{ij}(t)$ are linear forms in variables $t_0,t_1,t_2,t_3,t_4$. Then
\beq\label{det1}
X = \{x\in \bbP^4:\det (l_{ij}(x)) = 0\}.
\eeq
The six nodes of $X$ are the points $x$ such that $\rank (l_{ij}(x)) = 1.$

We see from formula \eqref{det1} that  a projective generation gives a determinantal representation of $X$. Conversely, the determinantal representation defines a projective generation.   

Segre's proof is rather cumbersome and I had a difficulty to follow it. A modern proof was given by B. Hassett and Yu. Tschinkel \cite{Hassett}. They deduce a determinantal representation of $X$ from a determinantal representation of a certain cubic surface associated to $X$. Let us reproduce a modified version of their proof that, in my opinion, is more straightforward and constructive. 

\begin{theorem} Let $X$ be a nondegenerate 6-nodal cubic hypersurface in $\bbP^4$. Then $X$ is isomorphic to the hypersurface $V(\det(A))$, where $A$ is a $3\times 3$-matrix with linear form in coordinates on $\bbP^4$. 
\end{theorem}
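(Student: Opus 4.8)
The plan is to build the matrix $A$ directly from the defining equation $t_0a_2+a_3$ of $X$ by isolating the variable $t_0$. Writing $A=t_0A_0+A'$, where $A_0$ is a constant matrix and $A'$ is a $3\times3$ matrix of linear forms in $t_1,\dots,t_4$, I want $A_0$ to have rank $1$, say $A_0=uv^{T}$ for column vectors $u,v\in\bbC^3$. The matrix determinant lemma then gives the exact identity
\[
\det(A'+t_0uv^{T})=\det A'+t_0\,v^{T}\adj(A')\,u,
\]
with no higher powers of $t_0$ (the adjugate entries are the $2\times2$ cofactors of $A'$, hence quadrics in $t_1,\dots,t_4$). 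Thus it suffices to produce a determinantal representation $\det A'=a_3$ of the cubic surface $F=V(a_3)$ together with vectors $u,v$ for which the quadric $v^{T}\adj(A')\,u$ equals $a_2$; then $\det A=a_3+t_0a_2$ is precisely the equation of $X$, and $X=V(\det A)$.

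For the representation I use the double-six furnished by Lemma \ref{fact}. Recall the classical fact that the linear determinantal representations of a smooth cubic surface are in bijection with its ordered double-sixes: a representation $\det A'=a_3$ has a rank-$2$ matrix $A'(x)$ at every $x\in F$, so $\adj(A')(x)$ has rank $1$ and factors as $\adj(A')(x)=r(x)\,l(x)^{T}$, where $x\mapsto[r(x)]$ and $x\mapsto[l(x)]$ are the right- and left-kernel maps $F\to\bbP^2$. These two maps are exactly the two contractions blowing down the two halves of the associated double-six. Choosing the representation attached to the double-six of Lemma \ref{fact}, I arrange its kernel maps to be the morphisms $\pi_1=|C_1|$ and $\pi_2=|C_2|$, which contract the six lines not meeting $C_1$ and the six lines not meeting $C_2$, respectively (a routine check shows $\ell\cdot C_i=0$ for the contracted six).

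With this representation in hand, for constant $u,v$ one has
\[
v^{T}\adj(A')\,u=\langle v,r\rangle\,\langle l,u\rangle,
\]
so its zero locus on $F$ is $\pi_1^{-1}(L_v)\cup\pi_2^{-1}(L_u)$, where $L_v,L_u\subset\bbP^2$ are the lines cut out by $v$ and $u$. Since $\pi_i^{*}\calO_{\bbP^2}(1)=\calO_F(C_i)$, the preimage of a line under $\pi_i$ is a member of $|C_i|$, a twisted cubic; in particular $C_1,C_2$ are themselves such preimages, so I may choose $u,v$ with $\pi_1^{-1}(L_v)=C_1$ and $\pi_2^{-1}(L_u)=C_2$. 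Then the quadric $\tilde q:=v^{T}\adj(A')\,u$ vanishes on $C_1\cup C_2$. Because $C_1\cup C_2=V(a_2,a_3)$ is a reduced complete intersection, the degree-$2$ part of its (saturated) ideal is the single line $\langle a_2\rangle$, whence $\tilde q=c\,a_2$ for a nonzero constant $c$; rescaling $u$ makes $\tilde q=a_2$. Setting $A=t_0uv^{T}+A'$ then yields $\det A=a_3+t_0a_2$, as required.

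The main obstacle is the second paragraph: one must pin down the classical dictionary between determinantal representations and double-sixes and, crucially, the geometric reading of the adjugate, namely that $\adj(A')$ factors as $r\,l^{T}$ and that the kernel maps $x\mapsto[r(x)]$, $x\mapsto[l(x)]$ are honest contractions $F\to\bbP^2$ coinciding with $\pi_1,\pi_2$ from Lemma \ref{fact}. These identifications are the load-bearing inputs; once they are available, everything else is formal. Throughout, the standing hypotheses (transversality of $H$ and nondegeneracy, via Proposition \ref{nondeg}) are what guarantee that $F$ is smooth, that $C_1,C_2$ are smooth rational cubics with $C_1+C_2\in|-2K_F|$ so that Lemma \ref{fact} produces a genuine double-six, and that $Q$ is the unique quadric through $C_1\cup C_2$, which is what forces the final identification $\tilde q=a_2$.
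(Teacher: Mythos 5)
Your proof is correct and follows essentially the same route as the paper: extend a determinantal representation $\det A'=a_3$ of the cubic surface $F$ attached to the pair $(C_1,C_2)$ of Lemma \ref{fact}, perturb it by a rank-one constant matrix $t_0uv^{T}$, and choose $u,v$ so that the resulting $t_0$-coefficient quadric vanishes on $C_1\cup C_2$, hence equals $a_2$ by uniqueness of the quadric through that complete intersection. Your use of the matrix determinant lemma and of the factorization $\adj(A')=r\,l^{T}$ is a cleaner packaging of the paper's expansion \eqref{matrix} and of its observation that the three cofactor determinants cut out the same line, but the substance of the argument is identical.
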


\begin{proof}
A normal cubic surface has only double rational points as its singularities. Each cubic surface without singular point of type $E_6$  is determinantal. This means that there is an embedding of $\bbP^3$ in the projective space  $\bbP^8$ of $3\times 3$ matrices such that the pre-image of the determinantal cubic hypersurface $\sfD_3$ in $\bbP^8$ is equal to $X$. 
This was proved first by L. Cremona in 1868 (with a gap related to the 
assumption on the singularities). C. Segre  had filled the gap in 1906. We refer to \cite{CAG}, 9.3 for the details.

So, our cubic surface $F = V(a_3)$, being nonsingular by our choice of projective coordinates, admits a determinantal representation. Let us recall its construction. We assume that  $F$ is a smooth cubic surface in the projective space $|W|$ of lines in a 
linear vector space $W$ of dimension 4. A determinantal equation of $F$ is defined by a choice of a linear system $|\gamma_1|$ of curves of degree 3 with $\gamma_1^2 = 1$. Let $|\gamma_2| = |-2K_F-\gamma_1|$ be represented by a smooth rational curve $\gamma_2$ with $\gamma_2^2 = 1$. The pair of smooth curves $(\gamma_1,\gamma_2)$ is a pair from Lemma \ref{fact}. Let $\pi_i:F\to 
|\gamma_i|^*$ be the corresponding  birational morphisms. The birational map 
$\pi_2\circ \pi_1^{-1}:|\gamma_1|^*\dasharrow |\gamma_2|^*$ is defined by the linear system of 
curves of degree 5 with double points at the points $p_i = \pi_1(\ell_i)$, where $(\ell_1,\ldots,\ell_6)$ is the sixer of lines blown down by $\pi_1$. Consider the natural map defined by adding the divisors 
\beq\label{schur}
|\gamma_1|\times |\gamma_2| \to |6e_0-2e_1-\cdots-2e_6| = |-2K_F| \cong |\calO_{|W|}(2)|.
\eeq
 The image of this map is a hyperplane in $|\calO_{|W|}(2)|$ orthogonal to a quadric $\calQ^*$ in the dual space $|W^\vee|$. It is  the dual quadric  of  the Schur  quadric $\calQ$  associated to the double-six  of lines blown-down by $\pi_1$ and $\pi_2$ (see \cite{CAG}, 9.1.3). Composing \eqref{schur} with a linear function defined by $\calQ^*$, we can identify the plane $|\gamma_2|$ with the plane $|\gamma_1|^*$, the dual plane of $|\gamma_1|$. Let $|\gamma_1| = |U|$ for some 3-dimensional linear space $U$. Then the map 
$\pi_1\times \pi_2:F\to |\gamma_1|^*\times |\gamma_2|^*$ can be identified with the  linear map
$$j:F\to |U^*|\times |U| \hookrightarrow |U^*\otimes U| =  |\End(U)| \cong \bbP^8.$$
Let $\sfD_3$ be the determinantal hypersurface in  $|\End(U)|$. It is a cubic hypersurface with singular locus equal to $|U^*|\times |U|$ (it  parameterizes endomorphisms of rank 1). The determinantal representation of $F$ is defined by the embedding $|W|\hookrightarrow |\End(U^*)| \cong |\End(U)|^*$ such that the pre-image of the determinantal hypersurface is equal to $S$ and the map $\pi_1$ (resp. $\pi_2$) is defined by taking the  kernel of the corresponding endomorphism (resp. its transpose) of $U^*$.
All of this is well-known and can be found in \cite{CAG}. 
 
Now let $X$ be a nondegenerate $6$-nodal cubic hypersurface given by equation \eqref{cubic1} and $C_1+C_2 = C(X,q)$ be the associated curve with respect to a node $q$. It follows from the above discussion that the linear systems $|C_1|$ and $|C_2|$ define two determinantal representation of $F$, each is obtained from another by taking the transpose of the matrix. 

Choose a basis in the linear space $U = H^0(F,\calO_F(\gamma_1))$ and the dual basis in $U^*$, then we can identify $\End(U)$ with the space of $3\times 3$-matrices and the determinantal representation of $F$ gives a matrix $B = (b_{ij})$ whose entries are linear forms in $t_1,\ldots,t_4$ such that 
$$F = V(\det B).$$
 We are looking for a matrix $\tilde{B} = (\tilde{b}_{ij})$ whose entries are linear forms in $t_0,\ldots,t_4$ such that 
$$X = V(\det \tilde{B}).$$ 
If we plug in $t_0 = 0$ in the entries of $\tilde{B}$, we should obtain a matrix equal to $B$, up to a scalar multiple. This shows that 
$\tilde{B} = t_0A+B$, where $A$ is a constant matrix. Write $A = [A_1 A_2 A_3]$ and $B = [B_1 B_2 B_3]$ as the collection of its columns. The usual formula for the determinant of the sum of the matrices shows that 
\beq\label{matrix}
\det \tilde{B} = t_0^3\det A+t_0^2 (\det [B_1 A_2 A_3]+\ldots)+t_0(\det [B_1 B_2 A_3]+\ldots)+\det B.
\eeq
To make this expression equal to $t_0a_2+a_3$ from \eqref{cubic1}, we have to take $A$ with rank equal to 1. So we may assume that the columns of $A$ are equal to some nonzero vector $\bfv = (\alpha_0,\alpha_1,\alpha_2)$. 

Let $\pi_1:F\to \bbP^2 = |U|$ and $\pi_2:F\to |U^*|$ be the two maps  defined by the right and the left kernels of the matrix $B$. Let $\ell = \pi(\gamma)$ and $\ell' = \pi'(\gamma')$. The curve $\ell$ is a line in $\bbP^2$, the curve $\ell'$ is a line in the dual plane.  Observe that, for any $y = [t_1,\ldots,t_4]\in F$, the adjugate matrix $\adj B$ of $B$ is of rank 1. Thus, for any $x\in F$, the equations $\det [B_1(x) B_2(x) \bfv] = 0, 
\det [B_1(x) \bfv B_3(x)] = 0, \det [\bfv B_2(x) B_3(x)] = 0$ with unknown vector $\bfv$, are the equations of the same line $\ell(x)$ in $\bbP^2$ which we consider as a point in the dual plane. When $x$ runs $C_1$, the image $\pi_1(C_1)$ is a line in the plane, hence the set of lines $\ell(x), x\in C_1,$ is a line in the dual plane. If we take it to be equal to the line equal to $\pi_2(C_2)$, we obtain that the coefficient at $t_0$ in \eqref{matrix} is equal to zero for any $x\in C_1\cup C_2$. Thus the quadric $Q'$ defined by this coefficient coincides with the quadric $Q$, and we are done. \end{proof}

\begin{remark}\label{R3.6} Suppose equation \eqref{cubic1} of a nodal cubic threefold can be brought to the form  $\det A(t) = 0$. Then, plugging in $t_0 = 0$, we obtain a determinantal representation of the cubic surface $F = V(a_3)$. This shows that $F$ has at most rational double points of type different from  $E_6$. Also, since the discriminant variety $\sfD_3$ has the double locus of degree 6, we obtain that the singular locus of $X$ is either of dimension $\ge 1$, or consists of isolated singular points whose Milnor numbers add up to 6.
\end{remark}

Let $X$ be a nondegenerate 6-nodal cubic threefold with nodes $q_1,\ldots,q_6$. The linear system of cubics $|\calO_{\bbP^4}(3)-2q_1-\cdots-2q_6|$ with double points at $q_1,\ldots,q_6$ defines a rational map $f:\bbP^4\dasharrow \sfS_3$ to the Segre cubic primal $\sfS_3$ in $\bbP^4$. Its fibers are quartic rational normal curves passing through the nodes. In Kapranov's  realization of $\calM_{0,7}$ this corresponds to the composition of the 
projection $\calM_{0,7}\to \calM_{0,6}$ and the map $\calM_{0,6}\to \sfS_3$ from \eqref{kap1}. This shows that $X$ is birationally isomorphic  to the pre-image of a hyperplane section of $\sfS_3$. Let $X'$ be the blow-up of $X$ at the nodes, followed by the blow-up of the proper transforms of lines joining two nodes. Then $f$ extends to a regular map $X'\to S$, where $S$ is a hyperplane section of $\sfS_3$. If we use equation \eqref{segre2} of  $\sfS_3$, then the additional equation 
$\sum_{i=0}a_it_i = 0$ defines a cubic surface $S$ given by Cremona's hexahedral equations (see \cite{CAG}, 9.4.3). By Theorem 9.4.8 from loc.cit., if $S$ is nonsingular, these equations determine uniquely an ordered double-six of lines on $S$. Conversely, a choice of an ordered double-six of lines defines Cremona's hexahedral equations. 

It is an obvious guess that the cubic surface $S$ is isomorphic to the cubic surface from Proposition \ref{nondeg}. To see this, we consider the blow-up $X'$ of $X$ at any of its singular point $q$. The exceptional divisor is identified with the quadric $Q_i$ containing $C(X,q_i)$.  The pre-image of the linear system $|\calO_{\bbP^4}(3)-2(q_1+\cdots+q_6)|$  to $X'$ restricted to the exceptional divisor  $E(q)$ consists of quadrics through the 5 points on $Q_i$ corresponding to the lines joining $q_i$ with other nodes of $X$. It maps $E(q_i)$ to the hyperplane section of $\sfS_3$ corresponding to $X$.

We can easily see the double-six of lines on $S$ identified with the blow-up of $Q_i$ at the singular  points of $C(X,q_i)$. It consists of the images of the ten lines on $Q$ passing through the singular points and the curves $C_1$ and $C_2$. The linear systems $|\gamma_1$ and $|\gamma_2|$ defining a determinantal representation of $S$ are the images of the curves of bi-degree $(3,1)$ and $(1,3)$ passing through the singular points of $C(X,q)$. The order on the 6 nodes of $X$ defines an order on the set of singular points of $C(X,q)$, hence an order of the lines in a sixer, hence a basis of the Picard group of the cubic surface $S$.

Finally, let us look at the moduli space of nondegenerate 6-nodal cubic hypersurfaces. By a projective transformation we can  fix the nodes, to assume that their coordinates form the reference points in $\bbP^4$. Then the space of cubics with singular points at the reference points consists of cubics with equations
$$\sum_{0\le i < j<k \le 4}a_{ijk}t_it_jt_k = 0,$$
where the coefficients $a_{ijk}$ satisfy 
$$\sum a_{0jk} = 0,\ \sum a_{i1k} = 0,\ldots, \sum a_{ij4} = 0.$$
The dimension of the projective  space $|V|$ of such cubics is equal to 4. The permutation group $\frakS_6$ acts linearly on $V$ via its natural $5$-dimensional irreducible permutation. By above, the linear system of such cubics map $\bbP^4$ onto the Segre cubic primal $\sfS_3$. Thus its dual space is identified with the space of hyperplane sections of $\sfS_3$. The action of $\frakS_6$ agrees with the natural action of $\frakS_6$ on $\sfS_3$. We know that the orbit of a hyperplane section of $\sfS_3$ corresponds to the moduli space of cubic surfaces together with an unordered double-six. It is the cover of degree 6 of the moduli space of cubic surface.  It is known that such variety is rational (see \cite{Bauer}).
This gives us the following result.

\begin{theorem} The moduli space of nondegenerate 6-nodal cubic threefolds is naturally birationally isomorphic to the moduli space of cubic surfaces together with an unordered double-six of lines. It is a rational variety of dimension 4.
\end{theorem}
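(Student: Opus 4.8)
The plan is to present the moduli space as a finite quotient of a projective space and then import rationality from the theory of cubic surfaces and their double-sixes.

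First I would rigidify the six nodes. For a nondegenerate $X$ the six nodes are in general linear position, and $\PGL_5$ acts simply transitively on ordered frames of six points in general position in $\bbP^4$; so I may normalize the nodes to be the reference points, whose $\frakS_6$-orbit realizes the standard $5$-dimensional representation $V$. With this normalization a nondegenerate 6-nodal cubic becomes $V(F)$ for $F$ in the $4$-dimensional linear system $|V| = \bbP(V)$ of cubics double at the reference points, whose explicit equations were written out above. Any projective isomorphism between two such threefolds must carry nodes to nodes, hence must be a projective symmetry of the frame; the group of those symmetries is exactly the copy of $\frakS_6 \subset \PGL_5$ acting on $V$ through its standard representation. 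Therefore the moduli space is birational to $\bbP(V)/\frakS_6 = |V|/\frakS_6$, an irreducible $4$-dimensional variety (the dimension being immediate since $\dim|V| = 4$ and $\frakS_6$ is finite).

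Next I would identify this quotient intrinsically. By the discussion above the linear system $|V|$ maps $\bbP^4$ onto the Segre primal $\sfS_3$, so the dual space $|V|^*$ is the space of hyperplane sections of $\sfS_3$, and the $\frakS_6$-action on $|V|$ matches the standard $\frakS_6$-symmetry of $\sfS_3$. A general hyperplane section is a nonsingular cubic surface --- precisely the surface $S$ attached to $X$ in Proposition \ref{nonsing}, carrying the double-six given by the images of the ten lines through the singular points of $C(X,q)$ together with $C_1$ and $C_2$. By the theory of Cremona's hexahedral equations (see \cite{CAG}, 9.4.3 and 9.4.8) such a hyperplane section is the same datum as a cubic surface equipped with an ordered double-six. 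Passing to $\frakS_6$-orbits amounts to forgetting the ordering of the hexahedral coordinates, that is, the ordering of the double-six. Thus $|V|/\frakS_6$ is birational to the moduli space of cubic surfaces together with an unordered double-six, which is the natural finite cover of the moduli space of cubic surfaces.

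The final step is rationality, for which I would invoke \cite{Bauer}: the moduli space of cubic surfaces with an unordered double-six is rational. Chaining this with the two birational identifications above yields that the moduli space of nondegenerate 6-nodal cubic threefolds is birational to a rational $4$-fold, as asserted.

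I expect the main obstacle to be the second paragraph: establishing that the correspondence ``hyperplane section of $\sfS_3 \leftrightarrow$ cubic surface with ordered double-six'' is genuinely $\frakS_6$-equivariant, i.e. that the abstract permutation action on the hexahedral coordinates coincides with the geometric action permuting the lines of the double-six, and that this in turn matches the relabelling of the six nodes of $X$. This is exactly what ties together the surface of Proposition \ref{nonsing}, the Cremona normal form, and the standard $\frakS_6$-action on $V$; once it is in place the dimension count and Bauer's theorem finish the argument at once.
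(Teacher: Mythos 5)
Your proposal is correct and follows essentially the same route as the paper: normalize the six nodes to the reference points, pass to the $4$-dimensional linear system $|V|$ of cubics double at those points modulo the residual $\frakS_6$-action, identify this with hyperplane sections of the Segre cubic $\sfS_3$ modulo its $\frakS_6$-symmetry, read these off via Cremona's hexahedral equations as cubic surfaces with an unordered double-six, and conclude rationality from \cite{Bauer}. The equivariance issue you single out as the main obstacle is precisely the step the paper also leans on without further elaboration (it simply asserts that the $\frakS_6$-action on $V$ agrees with the natural action on $\sfS_3$).
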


\begin{remark} Let $X \subset |W| \cong \bbP^4$ and $W\to \End(U)$ be a linear map defined by a determinantal representation  of $X$. Consider the nondegenerate bilinear form on $\End(U)$ defined by the trace. It allows one to identify $\End(U)$ with its dual space $\End(U)^\vee$. The orthogonal space $W^\perp$ is of dimension 4, and the linear embedding $W^\perp \hookrightarrow \End(U)$ defines a determinantal representation of the cubic surface $S' = |W^\perp|\cap \sfD_3$. It is proven in \cite{Hassett} that this cubic surface is isomorphic to the cubic surface $S$.

Conversely, starting with a determinantal representation of a nonsingular cubic $S \subset |V|$ defined by 
a linear map $V\to \End(U)$, we obtain a determinantal 6-nodal cubic $X$ by taking the intersection of $|V^\perp|$ with the determinantal hypersurface $\sfD_3$. This is the approach to determinantal representations of 6-nodal cubic threefolds taken in \cite{Hassett}. It is analogous to the earlier construction of A. Beauville and R. Donagi \cite{Beauville} that uses pfaffian hypersurface to pair  K3 surfaces of genus 8 and 4-dimensional pfaffian cubic hypersurfaces. 
\end{remark}

The number of small resolutions of a nondegenerate 6-nodal cubic threefold is equal to 64. Among them there are two projective resolutions \cite{FW}. Note that a degenerate irreducible 6-nodal cubic does not admit a small projective resolution.

\section{The Fano surface of lines}\label{S:1}
Corrado Segre also studied the  surface $F(\sfS_3)$ of lines in $\sfS_3$. Later on, Gino Fano wrote two papers which establish the basic facts about the surface of lines in any nodal cubic threefold \cite{Fano1}, \cite{Fano2}. For a modern exposition of some of Fano's result see  \cite{Altman}). Here we remind some known facts about the  Fano surface $F(X)$ of lines in a nodal cubic threefold, nowadays called the \emph{Fano surface} of $X$. In the Pl\"ucker embedding, the surface $F(X) \subset G_1(\bbP^4)$ is a locally complete intersection projectively normal surface of degree 45 canonically embedded in the Pl\"ucker space $\bbP^9$. Its class in $H^4(G_1(\bbP^4),\bbZ)\cong \bbZ^2$ with respect to the basis given by the Schubert cycles $\sigma_1$ of lines intersecting a general line and $\sigma_2$ of lines in a general hyperplane  is equal to $(18,27)$. 

The Fano surface $F(X)$  is smooth at any point representing a line that does not pass through a singular point of $X$. The union of lines passing through a node $q\in X$ is projected from $q$ to the curve $C(X,q)$. In particular, $F(X)$ is  a non-normal surface. If  the number $d$ of nodes is less than or equal to 5, it is an irreducible surface birationally isomorphic to the surface of bisecant lines of the associated curve $C(X,q)$. Starting from $d = 6$, the curve $C$ become reducible, and $F(X)$ becomes reducible too. 

\smallskip
Let us start with the Segre cubic primal $\sfS_3$. 
Under the map $f:\bbP^3\dasharrow \sfS_3$ given by the linear system of quadrics through 5 points $p_1,\ldots,p_5$, the image  of a line $\ell$ containing the point $p_i$ is a line in $\sfS_3$. The set $D_i$ of such lines is isomorphic to a del Pezzo surface of degree 5, the blow-up of 4 points in the plane (the points are of course the lines joining $p_i$ with $p_j, j\ne  i$). This gives us five isomorphic irreducible components of $F(\sfS_3)$. The image of a twisted cubic passing through the points $p_1,\ldots,p_5$ is also a line in $\sfS_3$. The set of such lines is also isomorphic to a del Pezzo surface of degree 5. To see this, we use the Kapranov's realization of the projection map 
$\overline{\calM}_{0,6}\to \overline{\calM}_{0,5}$. So we have 15 components isomorphic to $\bbP^2$, they are lines containing in a plane in $\sfS_3$. Each of these planes are the images of a conic in the plane $ \la p_i,p_j,p_k\ra$ through the points $p_i,p_j,p_k$ or the lines  in one of the exceptional divisor $E_i$ over the point $q_i$.
Each line intersects five planes. For example, a line from $D_1$  intersects the plane corresponding to the exceptional divisor  $E_i$ and four planes 
$\la p_a,p_b,p_c\ra, 1\not\in \{a,b,c\}.$ So, we see that $F(\sfS_3)$ is highly reducible, it consists of 6 components isomorphic to a del Pezzo surface of degree 5, and 15 components isomorphic to the projective planes. In the Pl\"ucker embedding of $G_1(\bbP^4)$ they are anti-canonically embedded surfaces of degree 5, and the planes.

The group $\frakS_6$ of automorphisms of $\sfS_3$ acts transitively on the set of 6 components $D_i$ of the Fano surface. The stabilizer subgroup is isomorphic to the group of automorphisms of $D_i$. It also acts transitively on the 15 plane components of the Fano surface. The stabilizer group is isomorphic to $2^3\rtimes \frakS_3$ of order 48.

\begin{remark} The description of conic bundle structures on the Segre cubic primal can be also found in  \cite{Gwena}.
\end{remark}

\bigskip
Next we consider the case when $X$ is a nondegenerate 6-nodal cubic threefold. 

Since $X$ has 6 nodes, the Fano surface is singular along the curve parameterizing lines passing through a node.  Fix a node $q$ of $X$ and let $C(X,q) = C_1+C_2 = Q\cap F$ be the associated curve of degree 6. Each line passing through $q$ is projected to a point on $C(X,q)$. Since $C(X,q)$ consists of two curves of degree 3, we see that the lines through $q$ are  parameterized by $C(X,q)$ and their union consists of two cubic cones intersecting along 5 common lines.

 Let $\Bis(C)$ be the set of lines in $\bbP^3$ that intersect $C(X,q)$ at $\ge 2$ points counting with multiplicity. For any two points $x,y\in C(X,q)$, let $\ell = \overline{x,y}$ be the line spanned by $x,y$ or tangent to $Q = V(a_2)$ if $x =y$. Suppose $\ell$ intersects $C$ at two points $x,y$. The linear system of cubics through $C(X,q)$ maps $\bbP^3$ to $X$ and the image of $\ell$ is a line $\ell_{x,y}$ on $X$. If $\ell_x,\ell_y$ are the lines through $q$ which are projected to $x,y$, then $\ell_{x,y}$ is the residual line of the intersection of the plane spanned by $\ell_x,\ell_y$ with $X$ (if $x = y$ we take the tangent plane of $X$ along the line $\ell_x$). If $\ell$ intersects $C(X,q)$ at three point, then it must belong to one of the two rulings of $Q$. Let $C_i$ be the component of $C(X,q)$ such that this ruling intersects $C_i$ at one point $z$. Then we assign to it the line $\ell_z$ passing through $q$. For any line $\ell$ in $X$, let $\Pi(\ell)$ be the unique plane containing $q$ and cutting $X$ in 3 lines. If $q\not\in \ell$, then $\Pi(\ell)$ is spanned by $\ell$ and $q$. Otherwise, $\Pi(\ell)$ is the unique plane cutting $X$ along three lines passing through $q$. The projection of $\Pi(\ell)$ to $\bbP^3$ from the point $q$ is a line from $\Bis(C(X,q))$. It is contained in $Q$ if $\Pi(\ell)\cap X$ consists of three lines passing through $q$. This defines an isomorphism
 $$ \Bis(C(X,q))\cong F(X).$$
Obviously, $\Bis(C(X,q))$ consists of three irreducible components, two of which are isomorphic to $P_i = \Bis(C_i)$. The third component $P_3$ consists of lines intersecting both $C_1$ and $C_2$.

For any general point in $\bbP^3$ there exists a unique bisecant line of a normal rational cubic. Since a general line intersects $X$ at three points, we see that the Schubert cycle $\sigma_1$  intersects the components $P_1$ and $P_2$ of $F(X)$ at three points. A general hyperplane $H$ in $\bbP^4$ intersects $X$ along a nonsingular cubic surface. It also intersects each of the cubic cones of lines passing through $q$ along a cubic rational curve $\gamma_i$.  The curve $\gamma_i$ can be identified (under the projection map) with a component $C_i$ of $C(X,q)$. Using Lemma \ref{fact}, we see that this defines a set of 6 skew lines, each intersecting one of the curves $\gamma_i$ with multiplicity 2. These lines correspond to 6 bisecants of $C_i$. This shows that the bidegree of the surface $\Bis(C_i)$ in $G_1(\bbP^4)$ is equal to $(3,6)$, it is isomorphic to $\bbP^2$ embedded by the third Veronese map.

The third component $P_3$ of $F(X)$ must be of bidegree $(12,15)$. As above, 15 corresponds to the 15 lines in a general hyperplane that intersect $\gamma_1$ and $\gamma_2$, and 12 corresponds to the fact that through each point $x$ on the intersection of a general line with $X$  passes 4 lines whose projection to $\bbP^3$ intersects $C_1$ and $C_2$ at one point. 

It follows from Proposition \ref{nonsing} that $P_3$ is isomorphic to the cubic surface $S$ isomorphic to the blow-up of $Q$ at the singular points of $C(X,q)$. Thus we have a map
$$\phi:P_1\sqcup P_2\sqcup P_3 \cong \bbP^2\sqcup \bbP^2\sqcup S \to F(X)$$
that coincides with the normalization map.

Note that $P_1\cap P_2$ consists of 10 points corresponding to lines $\overline{q_i,q_j}$. The intersection $P_i\cap P_3$ consists of 5 lines corresponding to the lines through $q_i$ intersecting $C_i$ at some other point. 

\begin{remark} It follows from the above that the isomorphism class of the cubic surface isomorphic to the blow-up of $Q$ at singular points of $C(X,q)$ is independent of a choice of a node. 
\end{remark}

\begin{remark} The following nice observation is due to A. Verra. Take any line $\ell$ on $X$ and consider its image $f(\ell)$ under the map $f:\bbP^4\to \bbP^4$ given by the linear system of cubics through the nodes of $X$. As we saw in above, the image of $X$ is isomorphic to the cubic surface $S$. The three components of $F(X)$ are distinguished by the following different kinds of the curves $f(\ell)$. The image of a line from the components $P_1$ or $P_2$ is a twisted cubics $\gamma_i$ such that $|\gamma_i|$ gives one of the two blowing down structures of $S$ defined by a double-six of lines. The image of a line from the component $P_3$  is cut out by a tangent plane section of the cubic surface. 
\end{remark}

 One can easily describe the Fano surface $F(X)$ of lines on $X$ using a determinant representation $X = V(\det A)$ of $X$. Let $q_1,\ldots,q_6$ be the nodes of $X$ and $X^{\ns} = X\setminus \{q_1,\ldots,q_6\}$. The nodes correspond to the points at which the matrix $X$ is of rank 1. The  right and the left kernel of $A$ define two regular map $\frakr:X^{\ns}\to \bbP^2$ and $\frakl:X^{\ns}\to \bbP^2$. The fiber $\frakr^{-1}(p)$ (resp. $\frakl^{-1}(p)$) consists of the points $x\in X'$ such that the right kernel (resp. the left kernel) of the matrix $A(x)$ is equal to $p$. The restriction of these maps to the hyperplane $t_0 = 0$ in equation \eqref{cubic1} define the two blowing-down structures on the cubic surface $F = V(a_3)$ defined by the linear systems $|C_1|, |C_2|$. These are two irreducible components of $F(X)$ isomorphic to $\bbP^2$.  At each node $X$ contains two  scrolls $\calS_1, \calS_2$ of lines passing through $q_i$. We may assume that $\frakr(\calS_1)$ and $\frakl(\calS_2)$ are lines. For any pair of lines 
$\ell_i \subset\calS_i$, the plane spanned by $\ell_1$ and $\ell_2$ intersects $X$ at the union of $\ell_1,\ell_2$ and a third line $\ell$. It belongs to the third irreducible component of $F(X)$. The pair of points  
$(y_1,y_2) = (\ell\cap\ell_1,\ell\cap\ell)$ corresponds to a point on the cubic surface 
$S$ associated to $X$ via Proposition \ref{nondeg}. 

Since the components $P_1,P_2$ of $F(X)$ do not depend on the determinant representation
of $X$, we obtain the following.

\begin{corollary} A nondegenerate 6-nodal cubic threefold admits two non-equivalent determinant representations, one is obtained from another by taking the transpose of the matrix.
\end{corollary}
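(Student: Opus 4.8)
The plan is to read off the two determinantal representations directly from the two distinguished components $P_1$ and $P_2$ of the Fano surface, using the intrinsic construction already set up in the proof of the determinantal theorem above. Recall that a determinantal representation $X = V(\det A)$ is equivalent to a choice of two birational morphisms $F \to \bbP^2$, one recording the right kernel of $A(x)$ and one recording the left kernel. In the discussion preceding this corollary we identified these two maps with the projections $\frakr$ and $\frakl$, and we saw that their restrictions to the hyperplane $t_0 = 0$ give precisely the two blowing-down structures of the cubic surface $F = V(a_3)$ determined by the linear systems $|C_1|$ and $|C_2|$. The whole point is that $\frakr$ and $\frakl$ globalize, over all of $X^{\ns}$, the data carried by the two components $P_1 \cong \Bis(C_1)$ and $P_2 \cong \Bis(C_2)$ of $F(X)$.

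First I would observe that a determinantal representation canonically produces the pair $(P_1, P_2)$: the right-kernel map $\frakr$ assigns to a general line $\ell \subset X$ a point of $\bbP^2$, and as $\ell$ varies over a ruling of one of the scrolls $\calS_i$ through a node, one recovers one of the two families of bisecants. Taking the transpose $A^\top$ interchanges the roles of left and right kernels, hence interchanges $\frakr$ with $\frakl$, and therefore interchanges $P_1$ with $P_2$. Second, by the immediately preceding sentence in the text, the two components $P_1, P_2 \cong \bbP^2$ are intrinsic to $X$ together with the choice of node $q$ (indeed they are simply two of the three normalized components of $F(X)$), so they do not depend on which determinantal representation we started from. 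Consequently any determinantal representation of $X$ must induce, up to labelling, this same unordered pair of families.

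This gives both assertions. Existence of two representations is the theorem already proven, together with the observation that transposing $A$ yields a second matrix with $X = V(\det A^\top) = V(\det A)$; these two are the representations distinguished by which of $P_1, P_2$ is recorded by the right kernel. Non-equivalence follows because an equivalence of determinantal representations (multiplication of $A$ on left and right by constant invertible matrices, together with a coordinate change) preserves the right-kernel map up to projective automorphism of $\bbP^2$, hence cannot swap $P_1$ and $P_2$; since $P_1 \ne P_2$ as subvarieties of $F(X)$ (they meet in only the $10$ points corresponding to the lines $\overline{q_i,q_j}$, as noted above, so neither contains the other), the representation $A$ and its transpose are genuinely inequivalent.

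I expect the only real subtlety to lie in the non-equivalence claim, specifically in pinning down the correct notion of equivalence of determinantal representations and checking that transposition is not absorbed by it. The safe route is to argue geometrically rather than matrix-theoretically: two determinantal representations are equivalent precisely when the associated ordered pairs of blowing-down structures on $F$ (equivalently, the induced linear systems $|\gamma_1|, |\gamma_2|$) agree, and transposition swaps this ordered pair. Since $|C_1| \ne |C_2|$ — the two rational cubics $C_1, C_2$ are of distinct types $(2,1)$ and $(1,2)$ on $Q$ by Proposition \ref{nondeg}, hence are genuinely different curve classes — the two representations cannot be identified, and the count of exactly two follows from the fact that the determinantal data is nothing more than an ordering of this unordered pair.
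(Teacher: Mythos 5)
Your proposal is correct and follows essentially the same route as the paper: the paper likewise identifies a determinantal representation with the pair of right/left kernel maps $\frakr,\frakl$ whose fibres sweep out the components $P_1,P_2$ of $F(X)$, notes that these components are intrinsic to $X$, and concludes that transposition (which swaps the two kernels, hence $P_1$ and $P_2$) gives the only other, inequivalent, representation. You merely make explicit the non-equivalence step (equivalences $A\mapsto PAQ$ preserve the right-kernel map up to a projectivity, and $P_1\neq P_2$) that the paper leaves implicit in the single sentence preceding the corollary.
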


\section{Conic bundles}
Let $X$ be a $d$-nodal cubic hypersurface in $\bbP^4$ and $\ell$ be a line on $X$. We assume that $\ell$ is not contained in any plane on $X$. The projection from $\ell$ defines a conic bundle structure on the blow-up $X'$ of $X$ along $\ell$.  The discriminant curve $\Delta_\ell$ of this bundle is of degree 5 and has $\ge d$ ordinary nodes or cusps. To see this, one takes a general hyperplane section $F$ of $X$ that contains $\ell$. It is know that a line in a nonsingular cubic surface intersects ten more lines forming 5 pairs of lines that are coplanar with $\ell$. The restriction of the projection to $F$ defines a conic bundle on $F$ with 5 singular fibers. The projection from $\ell$ defines a conic bundle on $F$ equal to the pre-image of the conic bundle on $X'$ over a line. This shows that the degree of the discriminant curve is equal to 5. 

Let $\Gamma_\ell$ be the curve of lines in $X$ intersecting $\ell$. For any line $\ell'\in \Gamma$, we have the line $\ell''$ such that $\ell,\ell',\ell''$ are coplanar. This defines an involution 
$\iota:\Gamma_\ell\to \Gamma_\ell$, and the quotient by this involution is isomorphic to $\Delta_\ell$. If $X$ is a nonsingular, G. Fano showed in \cite{Fano2} that $\Gamma$ is a nonsingular curve of genus 11 and degree 15 in $G_1(\bbP^4)$  (see a modern proof in \cite{CG}). 

We start with the case $d = 10$. A line on $X$ which is not contained in a plane in $X$ belongs to one of the six del Pezzo components $D_i$. We assume also that $\ell$ is a general line from $D_i$. Since $\frakS_6$ permutes these components, we may assume that  $\ell\in D_1$. Consider the projection map $\textrm{Bl}_{\sfS_3}(\ell)\to \bbP^2$, 
where $\textrm{Bl}_{\sfS_3}(\ell)$ is the blow-up of the line $\ell$.  We consider $\sfS_3$ as the image of the rational map $\bbP^3\dasharrow \sfS_3$ given by quadrics through 5 points $p_1,\ldots,p_5$. The curve $\Gamma_\ell$ consists of 10 components. Four of the components   are conics $K_i$ of lines in $D_i, i\ne 1,6,$ represented by lines in $\bbP^3$ intersecting $\ell$. Another four components are lines $L_{ijk}$ represented by conics in the planes $\Pi_{ijk}, 1\not\in \{i,j,k\}$. Also we have the conic $K_5$ of lines in $D_6$ represented by rational normal cubics through $p_1,\ldots,p_5$ intersecting $\ell$ and the pencil $L_1$ of lines in the exceptional divisor $E(p_1)$ passing through the point corresponding to the line $\ell$. The involution $\iota:\Gamma_\ell\to \Gamma_\ell$ pairs 
the conic $K_i$ with the line $L_{abc}, i\not \in \{a,b,c\}$ and  the conic $K_5$ with the line $L_1$.  The following picture shows a component of $\Delta_\ell$ represented by the pair
$K_2$ and $L_{345}$.

\xy (-50,-15)*{};(-50,25)*{};
(0,0)*{};(37,17.5)*{}**\dir{-};
(50,0)*{};(15,17.5)*{}**\dir{-};
(2,-3)*{p_1};(48,-3)*{p_2};(13,-5)*{p_3};(38,-5)*{p_4};(25,18)*{p_5};
(39,18)*{\ell};(13,18)*{\ell'};(25,-6)*{C};
(2,1)*{\bullet};(48,1)*{\bullet};
(16,-5)*{\bullet};(34.5,-5)*{\bullet};(25,15)*{\bullet};
(25,3)*\cir<35pt>{};
\endxy

Next we consider the case of a nondegenerate $6$-nodal cubic threefold. 

 Let $F$ be a nonsingular cubic surface obtained by blowing up $6$ points $x_1,\ldots,x_6$ in the plane. Let $(e_0,e_1,\ldots,e_6)$ be the corresponding geometric basis of $\Pic(F)$. Let $|\gamma_1| = |e_0|$ and $|\gamma_2| = |5e_0-2(e_1+\ldots+e_6)$ as in Lemma \ref{fact}. We divided all 27 lines on $F$ in three disjoint subsets. The first set of 6 lines intersecting $\gamma_1$ will be called lines of type $(0,2)$, they are represented by the exceptional curves $E(x_i)$. The second set of 6 lines will be called lines of type $(2,0)$, they are represented by the conics through 5 points. The remaining 15 lines will be called lines of type $(1,1)$, they are represented by lines joining a pair of the six points.
 
Let $H$ be a hyperplane in $\bbP^4$ that cuts out $X$ along a nonsingular surface $F$. We fix a node $q$ to identify the linear system of hyperplanes with the linear system of cubics  in $H$ containing the fundamental curve $C(X,q) = C_1+C_2$. Thus, we can divide all lines in $F$ in three sets as above.

Take a general line  $\ell$ in the component $P_1$ of $F(X)$. Let $\Gamma_\ell$ be the curve of lines in $X$ intersecting $\ell$. Since no curve from $P_1$ different from $\ell$ intersects $\ell$, we see that $\Gamma_1$ consists of two components $\Gamma_\ell(2)$ and $\Gamma_\ell(3)$ of curves from $P_2$ or $P_3$ intersecting $\ell$. Take another general curve $\gamma'$ from $P_1$ and let $F$ be a cubic surface in $X$ that contains $\ell$ and $\ell'$. We see that $\ell,\ell'$ are lines on $F$ of types $(2,0)$. We may represent then by two exceptional curves $E(q_i),E(q_j)$ of the blow-up of 6 points. The number of lines on $F$ of  type $(0,2)$ intersecting $\ell$ and $\ell'$ is equal to 4, they are represented by conics passing through 5 points including $p_i$ and $p_j$. This implies that $\Gamma_\ell(2)$ has self-intersection in $P_1\cong \bbP^2$ equal to 4. Similarly, we compute the self-intersection of $\Gamma_\ell(3)$. It is equal to 1. We also obtain that  $\Gamma_\ell(2)\cdot \Gamma_\ell(3)  = 2$. Thus $\Gamma_\ell(2)$ is a conic on $\bbP^2$ and $\Gamma_\ell(3)$ is a line. In the Pl\"ucker embedding of $G_1(\bbP^4)$, they are curves of degrees 6 and 3.\footnote{To get expected degree 15 one has to add six lines here that come from a choice of one ruling in each exceptional divisor of the resolution $X'\to X$.} The involution $\iota_\Gamma$ switches the two components. The discriminant curve $\Delta_\ell$ is an irreducible rational curve of degree 5 with 6 nodes.

Next, we take $\ell$ to be a general line in the component $P_3$. The curve $\Gamma_\ell$ consists of three components $\Gamma_\ell(1),\Gamma_\ell(2), \Gamma_\ell(3)$ of lines from $P_1,P_2,P_3$, respectively, intersecting $\ell$. By similar argument as above, we find that two disjoint lines of type $(1,1)$ on $F$ are intersected by one line of type $(0,2)$, one line of type $(2,0)$ and three lines of type $(1,1)$. This intersection matrix of these curves is equal to 
$$\begin{pmatrix}1&5&3\\
5&1&3\\
3&3&3\end{pmatrix}.$$
The curves $\Gamma_\ell(1)$ and $\Gamma_\ell(2)$ are rational curves of degree 3 on the cubic surface $P_3$, and the curve $\Gamma_\ell(3)$ is an elliptic curve of degree 3 on $P_3$. In the Pl\"ucker embedding of $G_1(\bbP^4)$ the curves are of degrees $3,3$ and $6$, respectively, and their sum is a hyperplane section equal to $-3K_{P_3}$. The involution $\iota$ switches the first two components and defines a fixed-point free involution on the third component. The discriminant curve $\Delta_\ell$ consists of the union of a conic and a cubic intersecting at 6 points.

\begin{remark} It follows that, for any general line from the component $P_3$, there exists a quadric singular along $\ell$ that contains the six nodes of $X$. Note that, counting parameters, we see that it is not true if we replace $\ell$ with a general line in $\bbP^4$. 

Also, $X$ contains an elliptic curve parameterizing the singular points of the conics corresponding to the degree 3 component of the discriminant curve $\Delta_\ell$, where $\ell\in P_3$. This elliptic curve contains the six nodes of $X$. When $\ell$ varies in $P_3$, we get a family of elliptic curves (or their degenerations) parameterized by the cubic surface $P_3$. Is it an elliptic fibration on the blow-up of $X$ at 6 nodes?
\end{remark}

\begin{remark} It is known that the intermediate Jacobian of a smooth cubic threefold is a principally polarized abelian variety of dimension 5. 
It is also known that it is isomorphic to the Prym variety associated to the discriminant curve of degree 5 of the conic bundle on $X$ defined by the projection from a line on $X$ (see, for example, \cite{Beauville1}). In the case when $X$ is a nondegenerate 6-nodal cubic threefold, the intermediate Jacobian  $J(X)$ degenerates to a 5-dimensional algebraic torus which is isomorphic to the generalized Prym variety $\textrm{Prym}(\tilde{\Delta}/\Delta)$, where $\Delta$ is the discriminant curve of the conic bundle on $X$ defined by the projection from a line on $X$ (as was recently shown in a joint work of  S. Casalaina-Martin, S. Grushevsky, K. Hulek and R. Laza, this isomorphism does not depend on a choice of a line). 

The projection from a line $\ell$ on $X$ corresponding to the bisecant of $C(X,q)$ joining two points $x,y$ on different component of $C(X,q)$ can be viewed as the conic bundle structure of the blow-up $\tilde{\bbP}^3$ of $\bbP^3$ at the reducible  curve $C(X,q)\cup \ell$ of degree 7 and arithmetic genus 5 which is given by the linear system of cubics through the curve. In the case of a connected smooth curve of degree 7 of genus 5 in $\bbP^3$ this conic bundle was considered by V. Iskovskikh (it is a Fano variety with the Picard number equal to 2, No 9 in the list of such varieties that can be found in \cite{Iskovskikh}). 
\end{remark}

 \section{6-nodal cubic threefolds and nonsingular cubic fourfolds}
 
Let $Y$ be a nonsingular cubic hypersurface in $\bbP^5$. Suppose $Y$ contains a  normal cubic scroll $T$ spanning a hyperplane $H$ in $\bbP^5$. It is isomorphic to the blow-up of $\bbP^2$ at one point $p$ and it is embedded in $\bbP^4$ by the linear system of 
conics through the point $p$ (see \cite{CAG}, 8,1,1). Choosing the point $p$ to be $[1,0,0]$ and the basis of the linear system in the form 
$(x_2^2,x_2x_3,x_1x_2,x_1x_3,x_1^2)$, we can 
express the  equations of the scroll as
$$\rank\begin{pmatrix}t_0&t_1&t_2\\
t_2&t_3&t_4\end{pmatrix} \le 1.$$
A cubic hypersurface in $\bbP^4$ containing $T$ can be given by the equation 
$$l_1(t_1 t_4-t_2 t_3)+l_2(t_2^2-t_0 t_4)+l_3(t_0 t_3-t_1 t_2) = 0,$$
For some linear forms $l_i$ in variables $t_0,\ldots,t_4$. For appropriate $l_1,l_2,l_3$, we find the intersection $X = H\cap X$. Clearly the previous equation gives a determinantal representation of $X$. As was explained in Remark \ref{R3.6}, we expect that $X$ is a nondegenerate 6-nodal cubic threefold.

Assume now that $Y$ is a nonsingular cubic 4-fold that contains a cubic scroll $T$ whose linear span intersects $Y$ along a nondegenerate 6-nodal cubic 3-fold $X$. We know that   the degree of the curve in $G_1(\bbP^4)$ parameterizing the ruling of $T$  is equal to 3 
(\cite{CAG}, 10,4,1.) This implies that $T$ is the pre-image of a line under under the two rulings of $X$ parameterized by the components $P_1,P_2$ of $F(X)$. In the determinant representation of $X$, $T$ is the pre-image of a line under the left or the right kernel maps.
Assume that $T$ is the pre-image of a line in the component $P_1$. We know that 
$\dim H^4(X,\bbQ) = $2. Let $\calQ$ be a quadric in $\bbP^4$ that contains $T$ (it could be taken as one of the minors defining $T$), it intersects $X$ along the union of $T$ and another cubic scroll $T'$ corresponding to a line in the component $P_2$. 

Let
$$\xymatrix{&Z\ar[dl]^{\sigma}\ar[dr]^{\tau}&\\
Y&&F(Y)}
$$
Be the incidence correspondence of lines and points in $Y$, where $F(Y)$ is the Fano fourfold of lines in $Y$. It is known to be an irreducible holomorphic symplectic 4-fold. The Abel-Jacobi map of integral Hodge structures
$$\Phi:\tau_*\sigma^*:H^4(Y,\bbZ)[2] \to H^2(F(Y))$$
 defines an isomorphism of free abelian groups of rank 21:
$$H^{2,2}(Y)\cap H^4(Y,\bbZ) \to H^{11}(F(Y),\bbZ)\cap H^2(F(Y),\bbZ)).$$
 The group $H^2(F(Y),\bbZ)$ is equipped with the Beauville-Bogomolov quadratic form $q_{BB}$ of signature $(1,20)$. The isomorphism $\Phi$ is a compatible (with the change of the sign) with the cup-product on primitive cohomology of $H^4(Y,\bbZ)$ and the Beauville-Bogomolov quadratic form restricted to the primitive cohomology of $H^2(F(Y),\bbZ)$. Let 
$\sigma$ be the class of a hyperplane section of $F(Y)$ in its Pl\"ucker embedding in $G_1[\bbP^5)$. It is known that the degree of $F(Y)$ is equal to 36, so that 
$\sigma^4 = $36. However, $q_{BB}(h) = $6. 

For a nonsingular cubic 4-fold $Y$, we have 
$$\Pic(F(Y)) = H^2(F(Y),\bbZ)\cap H^{11}(F(Y),\bbZ) \cong \bbZ^r,$$
If $Y$ is general in the sense of moduli (the number of them is equal to 20), then $r = 1$. 
 Let $h$ be the class of a hyperplane section of $Y$. Then 
$[X] = h$ and $2h^2 = [T]+[T']$. This shows that $\Phi([T]),\Phi[T']$ belong to $\Pic(F(Y))$ and give $r \ge $2. We assume that $r = $2 and the classes 
$\tau = \Phi([T])$ and $\sigma$ freely generate $\Pic(F(Y))$. The quadratic lattice 
$(H^4(Y,\bbZ),\cup)$  has a basis $(h^2,[T])$ and is defined in this basis by the matrix
$\left(\begin{smallmatrix}3&6\\
3&7\end{smallmatrix}\right).$
The quadratic lattice $(\Pic(F(Y)),q_{BB})$ has a basis $(\sigma,\tau)$ and is defined in this basis by the matrix
$\left(\begin{smallmatrix}6&6\\
6&2\end{smallmatrix}\right).$

In their paper \cite{Hassett}, B. Hassett and Yu. Tschinkel compute the nef cone of $F(Y)$ in $\Pic(F(Y))_\bbR$ and find that it equals the dual cone of the cone
spanned by the classes $\alpha_1 = 7\sigma-3\tau$ and $\sigma+3\tau$. 

Recall that   $F(X)$ contains two planes $P_1,P_2$. Since, obviously,  $F(X)\subset F(Y)$, we see that $F(Y)$ contains two planes (embedded in the Pl\"ucker space of $G_1(\bbP^5)$ as  
Veronese surfaces spanning $\bbP^5$). 

We may perform Mukai flop $P_1$ (resp. $P_2$) to obtain new holomorphic symplectic manifolds $V_1,V_2$ (\cite{Mukai}). The birational maps between $F(Y)$ and $V_1$ (resp. $V_2$) identify their Picard groups but induce an orthogonal transformation $R_1$ (resp $R_2$) of their quadratic forms. 

The following beautiful result is proven in \cite{Hassett}.

\begin{theorem} There is an infinite sequence of flops
$$\cdots F_{-2}\dasharrow F_{-1}^\vee \dasharrow F(Y)\dasharrow F_1\dasharrow F_2\cdots$$
with isomorphisms $F_i\cong F_{i+2}$. The cone of effective divisors on $F(Y)$ can be expressed as the non-overlapping union of nef cones of the $F_i$'s. The birational pseudo-automorphism of $F(Y)$ defined by $F(Y)\dasharrow F_1\dasharrow F_2\cong F(Y)$ acts on $F(Y)$ by the matrix $\left(\begin{smallmatrix}-1&-2\\
6&11\end{smallmatrix}\right)$ in the basis $\sigma,\tau$. The following picture copied from \cite{Hassett}:

\begin{figure}[ht]
\begin{center}
\includegraphics[scale=.7]{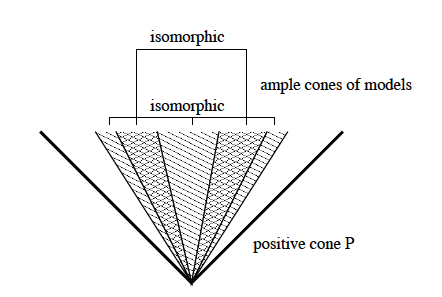}
\end{center}
\end{figure}
\end{theorem}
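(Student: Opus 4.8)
The plan is to translate the statement into the lattice-theoretic birational geometry of the hyperk\"ahler fourfold $F(Y)$, which is of $K3^{[2]}$-type, and then to realize the Mukai flops geometrically through the global Torelli theorem. Write $\Lambda = \Pic(F(Y))$ for the rank-two lattice with Gram matrix $\left(\begin{smallmatrix}6&6\\6&2\end{smallmatrix}\right)$ in the basis $(\sigma,\tau)$ fixed above; it has discriminant $-24$ and signature $(1,1)$, so the positive cone $\mathcal{C}^+$ is an open sector bounded by the two irrational null rays of $q_{BB}$. Two arithmetic facts drive everything. First, $\Lambda$ represents no $(-2)$-class: writing $q_{BB}(a\sigma+b\tau)/2 = 3a^2+6ab+b^2$ and reducing mod $6$ shows that $3a^2+6ab+b^2=-1$ is insoluble. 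Hence $F(Y)$ has no prime exceptional divisor, and therefore $\overline{\mathrm{Mov}}(F(Y)) = \overline{\mathrm{Eff}}(F(Y)) = \overline{\mathcal{C}^+}$. Second, $\Lambda$ does represent $-10$ with divisibility $2$: the classes $r_1 = 3\sigma-2\tau$ and $r_2 = \sigma-2\tau$ satisfy $q_{BB}(r_i) = -10$, and $r_1^\perp = \langle 7\sigma-3\tau\rangle = \langle\alpha_1\rangle$, $r_2^\perp = \langle\sigma+3\tau\rangle = \langle\alpha_2\rangle$.

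By the classification of walls for $K3^{[2]}$-type manifolds, a wall $a^\perp$ with $q_{BB}(a)=-10$ and $\mathrm{div}(a)=2$ is a flopping wall cut out by a Lagrangian plane. I would match this with the geometry already in hand: $F(X)\subset F(Y)$ produces exactly two planes $P_1,P_2$, and the nef cone computed in \cite{Hassett}, namely $\mathrm{Nef}(F(Y)) = \mathrm{cone}(\alpha_1,\alpha_2)$, has its two extremal walls $\langle\alpha_1\rangle,\langle\alpha_2\rangle$ equal to $r_1^\perp, r_2^\perp$. Thus the two extremal contractions of $F(Y)$ are the small contractions of $P_1$ and $P_2$, and crossing each wall is the Mukai flop of the corresponding plane \cite{Mukai}.

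Next I would compute the induced action on $\Lambda$. Each Mukai flop of a plane induces, by Markman's monodromy prescription, an integral Hodge isometry of $\Lambda$ that acts as $\pm\mathrm{id}$ on the discriminant group; carrying out these two computations in the basis $(\sigma,\tau)$ and composing them, together with the identification of the model two steps away with $F(Y)$, yields $M = \left(\begin{smallmatrix}-1&-2\\6&11\end{smallmatrix}\right)$, as in \cite{Hassett}. The decisive checks are arithmetic: $M^{t}\left(\begin{smallmatrix}6&6\\6&2\end{smallmatrix}\right)M = \left(\begin{smallmatrix}6&6\\6&2\end{smallmatrix}\right)$, $\det M = 1$, and $\mathrm{tr}\,M = 10$. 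Since $\mathrm{tr}\,M = 10 > 2$, $M$ is hyperbolic with irrational eigenvalues $5\pm 2\sqrt{6}$, whose eigenrays are precisely the two null rays of $q_{BB}$; in particular $M$ has infinite order and fixes no rational ray of $\mathcal{C}^+$.

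Finally I would globalize. Because $M$ is orientation preserving, preserves $\mathcal{C}^+$, and acts as $\pm\mathrm{id}$ on the discriminant group, it lies in the monodromy group, and the global Torelli theorem for hyperk\"ahler manifolds realizes it by a birational pseudo-automorphism of $F(Y)$; this identifies the model two flops away with $F(Y)$ itself, giving $F_i\cong F_{i+2}$ and a self-map acting by $M$. Iterating the pair of flops then produces the bi-infinite chain $\cdots \dasharrow F(Y)\dasharrow F_1\dasharrow F_2\dasharrow\cdots$, and Markman's chamber decomposition of the movable cone shows that the ample cones $\mathrm{Nef}(F_i)$ have pairwise disjoint interiors and tile $\mathrm{Mov}(F(Y))$; by the first paragraph this equals $\mathcal{C}^+$, so the $\mathrm{Nef}(F_i)$ give the asserted non-overlapping decomposition of $\overline{\mathrm{Eff}}(F(Y))$, accumulating only at the two irrational $M$-fixed rays. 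The main obstacle is geometric rather than arithmetic: one must (i) prove that the two extremal contractions of $F(Y)$ really are the plane-flopping small contractions of $P_1$ and $P_2$, and not some other extremal behavior, and (ii) invoke global Torelli together with Markman's monodromy and movable-cone theorems to guarantee that every wall-crossing is realized by an honest smooth hyperk\"ahler model and that the lattice isometry $M$ comes from a genuine pseudo-automorphism. Once these structural inputs are secured, the infinitude of the chain and the explicit matrix are forced by $\mathrm{tr}\,M = 10$.
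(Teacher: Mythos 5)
Your proposal cannot be compared against a proof in the paper, because the paper gives none: the theorem is quoted verbatim from Hassett--Tschinkel \cite{Hassett} ("The following beautiful result is proven in \cite{Hassett}"), so the paper's "proof" is a citation. What you have written is a genuine, and genuinely different, proof strategy: the original argument of \cite{Hassett} (2010) is explicit and geometric --- it constructs the flops directly from the families of cubic scrolls and computes the nef cones within their then-partly-conjectural framework for extremal rays on holomorphic symplectic fourfolds --- whereas you run the now-standard post-Verbitsky/Markman machine (wall classification for $K3^{[2]}$-type, monodromy, global Torelli). Your arithmetic is right and is the heart of the matter: $3a^2+6ab+b^2=-1$ is insoluble mod $3$, so there are no $(-2)$-classes; $r_1=3\sigma-2\tau$ and $r_2=\sigma-2\tau$ have $q_{BB}=-10$ with $r_1^\perp=\la 7\sigma-3\tau\ra$ and $r_2^\perp=\la\sigma+3\tau\ra$, matching the stated nef cone; and $M=\left(\begin{smallmatrix}-1&-2\\6&11\end{smallmatrix}\right)$ satisfies $M^{t}\left(\begin{smallmatrix}6&6\\6&2\end{smallmatrix}\right)M=\left(\begin{smallmatrix}6&6\\6&2\end{smallmatrix}\right)$, $\det M=1$, $\mathrm{tr}\,M=10$, forcing infinite order with irrational fixed rays. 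Two places deserve more care than your sketch gives them: the divisibility $\mathrm{div}(r_i)=2$ must be computed in all of $H^2(F(Y),\bbZ)$, not merely in $\Pic(F(Y))$ (your $\gcd(6,14)=2$ only bounds it); and the identification of the effective cone with the positive cone, and of the tiling by nef cones, is really Markman's statement about the movable cone plus the absence of stably prime-exceptional classes, which for $K3^{[2]}$-type reduces to the absence of $(-2)$-classes that you verified. With those two points and your flagged item (i) --- that the extremal contractions are exactly the small contractions of the Lagrangian planes $P_1,P_2$ coming from $F(X)\subset F(Y)$ --- filled in, your route gives a complete modern proof; what it buys over the original is uniformity (everything is forced by the lattice $\left(\begin{smallmatrix}6&6\\6&2\end{smallmatrix}\right)$), at the cost of invoking global Torelli and Markman's theorems, which the 2010 argument could not and did not use.
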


\bibliographystyle{plain}

\end{document}